\newcommand{\Rmnum}[1]{\expandafter\@slowromancap\romannumeral #1@}
\newcommand{\Om}{\Omega}
\newcommand{\ds}{\displaystyle}
\title{Emergence of lager densities in chemotaxis system with indirect signal production and non-radial symmetry case\footnote{This work is supported by the Scientific Research Fund (YS304221937) and the Young Doctor Program of Zhejiang Normal University (ZZ323205020520013068)}}
\author {Guangyu Xu\thanks{Corresponding author: guangyuswu@126.com,\ xuguangyu@zjnu.edu.cn}\\
\small College of Mathematics and Computer Science, Zhejiang Normal University,\\
\small Jinhua 321004, P.R. China.}
\date{}
\newtheorem{theorem}{Theorem}[section]
\newtheorem{definition}{Definition}[section]
\newtheorem{lemma}{Lemma}[section]
\newtheorem{remark}{Remark}
\newtheorem{corollary}{Corollary}[section]
\begin{document}
\baselineskip20pt \maketitle
\renewcommand{\theequation}{\arabic{section}.\arabic{equation}}
\catcode`@=11 \@addtoreset{equation}{section} \catcode`@=12
\begin{abstract}
  This paper deals with the classical solution of the following chemotaxis system with generalized logistic growth and indirect signal production
  \begin{eqnarray}\label{00}
  \left\{
  \begin{array}{llll}
  u_t=\epsilon\Delta u-\nabla\cdot(u\nabla v)+ru-\mu u^\theta, &\\
  0=d_1\Delta v-\beta v+\alpha w, &\\
  0=d_2\Delta w-\delta w+\gamma u, &
  \end{array}
  \right.
  \end{eqnarray}
  and the so-called strong $W^{1, q}(\Om)$-solution of hyperbolic-elliptic-elliptic model
  \begin{eqnarray}\label{01}
  \left\{
  \begin{array}{llll}
  u_t=-\nabla\cdot(u\nabla v)+ru-\mu u^\theta, &\\
  0=d_1\Delta v-\beta v+\alpha w, &\\
  0=d_2\Delta w-\delta w+\gamma u, &
  \end{array}
  \right.
  \end{eqnarray}
  in arbitrary bounded domain $\Om\subset\mathbb{R}^n$, $n\geq1$, where $r, \mu, d_1, d_2, \alpha, \beta, \gamma, \delta>0$ and $\theta>1$. Via applying the viscosity vanishing method, we first prove that the classical solution of \eqref{00} will converge to the strong $W^{1, q}(\Om)$-solution of \eqref{01} as $\epsilon\rightarrow0$. After structuring the local well-pose of \eqref{01}, we find that the strong $W^{1, q}(\Om)$-solution will blow up in finite time with non-radial symmetry setting if $\Om$ is a bounded convex domain, $\theta\in(1, 2]$, and the initial data is suitable large. Moreover, for any positive constant $M$ and the classical solution of \eqref{00}, if we add another hypothesis that there exists positive constant $\epsilon_0(M)$ with $\epsilon\in(0,\ \epsilon_0(M))$, then the classical solution of \eqref{00} can exceed arbitrarily large finite value in the sense: one can find some points $\left(\tilde{x}, \tilde{t}\right)$ such that $u(\tilde{x}, \tilde{t})>M$.
  \end{abstract}
{\bf Keywords}: local well-pose, viscosity vanishing method, chemotaxis, blow-up, damping source, indirect signal production.\\
{\bf AMS(2010) Subject Classification}: 35B44; 35K55; 35K40; 92C17.

\section{Introduction}
\hspace*{\parindent}

In this paper, we consider following initial boundary problem to chemotaxis system with indirect signal production
\begin{eqnarray}{\label{1}}
\left\{
\begin{array}{llll}
u_t=\epsilon\Delta u-\nabla\cdot(u\nabla v)+ru-\mu u^\theta, &x\in \Omega,\ t>0,\\
0=d_1\Delta v-\beta v+\alpha w, &x\in \Omega,\ t>0,\\
0=d_2\Delta w-\delta w+\gamma u, &x\in \Omega,\ t>0,\\
\ds\frac{\partial u}{\partial\nu }=\frac{\partial v}{\partial \nu}=\frac{\partial w}{\partial\nu }=0, &x\in\partial\Omega,\ t>0,\\
u(x, 0)=u_0, &x\in\Omega,
\end{array}
\right.
\end{eqnarray}
where $\Omega\subset \mathbb{R}^n$ is an bounded domain with smooth boundary $\partial\Om, n\geq1,\ \frac{\partial}{\partial\nu}$ denotes the derivative with respect to the outer normal on $\partial\Omega$, a no-flux condition is implied on boundary so that the ecosystem is closed to the exterior environment. The parameters satisfy
\begin{equation}\label{csdy}
\epsilon, r, \mu, d_1, d_2, \alpha, \beta, \gamma, \delta>0, \theta>1.
\end{equation}
Initial data $u_0$ is positive and satisfy
\begin{equation}\label{cszjs}
u_0\in W^{1, q}(\Om), q>n.
\end{equation}
Chemotaxis model with indirect signal production mechanism has been proposed in \cite{Strohm2013} to describe the spread and aggregative behaviors of the mountain pine beetle (MPB) in a forest habitat considered negligibly thin in its vertical dimension. The model describes the space and time evolutions of the densities of flying MPB, nesting MPB and the concentration of the beetle pheromone, they are represented by $u=u(x, t), w=w(x,t)$ and $v=v(x, t)$, respectively. Problem \eqref{1} can be viewed as a generalization of the classical Keller-Segel system, and the main feature of the corresponding {\color{blue} system} is that the signal productions occur in an indirect process, with first $u$ producing the third quantity $w$ and the latter being exclusively responsible for the release of $v$.

\subsection{Background}

Chemotaxis is a biological phenomenon describing the movement of individuals in the direction of increasing chemical concentration spread in the environment where they reside. The classical Keller-Segel model was proposed in \cite{keller1970initiation} to describe the aggregation of cellular slime molds Dictyostelium discoideum and in \cite{keller1971model} to describe the wave propagation of bacterial chemotaxis, where the chemical signals inducing a bias in the motion of the species is produced directly by the species itself. However, in a large number of realistic situations the mechanisms of signal production may be substantially more complicated, see for example \cite{Dillon1994,dingmeng2019,Painter2000,tello2016predator,Tuval2005}.

For following chemotaxis model with indirect signal production mechanism,
\begin{eqnarray}{\label{1jl}}
\left\{
\begin{array}{llll}
u_t=\Delta u-\nabla\cdot(u\nabla v)+f(u), \\
\tau v_t=\Delta v-v+w, \\
\tau w_t=d\Delta w-\delta w+u,
\end{array}
\right.
\end{eqnarray}
when $f(u)=0$, Tao and Winkler \cite{tao2017Critical} considered the model in the unit disk and radially symmetric setting with $d=0$ and the second equation replaced by $0=\Delta v-\frac{1}{|\Om|}\int_\Om wdx+w$, for the solution to this parabolic-elliptic-ODE system they found the mass threshold property refer to blow-up in infinite time rather than in finite time, which is surprisingly different from the case of classical Keller-Segel model. Later on, Lauren\c{c}ot \cite{Laurencot2019} considered model \eqref{1jl} with $d=0$ in arbitrary two-dimensional bounded domains. The author actually showed that the system possess a Liapunov functional and that the properties of this Liapunov functional provide insight on the boundedness or unboundedness of the solutions. For $d>0$, Fujie and Senba \cite{fujie2017application} studied system \eqref{1jl} with a fully parabolic type indirect signal production mechanism. Unlike the observations obtained in \cite{herrero1997blow,Horstmann2002,Horstmann2001,nagai2001blowup,Nagai1997Application} for the classical Keller-Segel system and in \cite{Laurencot2019,tao2017Critical} for the corresponding ODE-type indirect signal production system, therein the critical mass phenomenon occurs in the two-dimensional situation, the authors in \cite{fujie2017application} found that for the corresponding PDE-type indirect signal production model \eqref{1jl} the four-dimensional setting is the critical case and the critical mass is $m_c:=\frac{(8\pi)^2}{\chi}$. Specifically speaking, when $ n\leq 3$ one can invoke smoothing effects of diffusion terms of each equation of \eqref{1jl} and derive sufficiently high regularity estimates for solutions guaranteeing global existence from the mass conservation law. However in the four-dimensional setting the smoothing effect does not work enough and solutions remain bounded whenever the domain and initial data are radially symmetric such that $\int_\Om u_0<m_c$. Bai and Liu \cite{bai2018new} obtained an extension as well as a simplified proof for this boundedness result. Whereas the system under mixed boundary conditions in a smooth bounded convex domain possesses some solutions which blows up in finite or infinite time provided that $\int_\Om u_0>m_c$ \cite{fujie2019blowup}.

From the mentioned papers we know such indirect signal production mechanisms may delay or prevent the spontaneous singularity formation. It is widely known that the repulsion effect in chemotaxis model actually also benefits the global boundedness of solutions. The following attraction-repulsion chemotaxis system was proposed in \cite{luca2003chemotactic} to describe the aggregation of microglia observed in Alzheimer's disease and in \cite{painter2002volume} to describe the quorum effect in the chemotactic process,
\begin{eqnarray}{\label{xp1}}
\left\{
\begin{array}{llll}
u_t=\Delta u-\chi \nabla\cdot(u\nabla v)+\xi \nabla\cdot(u\nabla w)+f(u),\quad &x\in \Omega,\quad t>0,\\
\tau v_t=\Delta v+\tilde{\alpha} u-\tilde{\beta} v,\quad &x\in \Omega,\quad t>0,\\
\tau w_t=\Delta w+\tilde{\gamma} u-\tilde{\delta} w,\quad &x\in\Omega,\quad t>0.
\end{array}
\right.
\end{eqnarray}
For the global boundedness results with $n\leq3$, one can see \cite{Jin2015Asymptotic,jin2015boundedness,jin2019global,Liu2012Classical,lin2015large,liu2015global,tao2013competing}, see also \cite{Jin2016Boundedness} for the critical mass phenomenon with $n=2$ in an attraction-repulsion chemotaxis system with the attraction dominating case. Tao and Wang \cite{tao2013competing} considered problem \eqref{xp1} with multi-dimensional case and $\tilde{\beta}=\tilde{\delta}$, they studied the global solvability, boundedness, blow-up, existence of non-trivial stationary solutions and asymptotic behavior of the system for various ranges of parameter values. For problem \eqref{xp1} with $\tau=0, \chi\tilde{\alpha}=\xi\tilde{\gamma}, \tilde{\beta}\not=\tilde{\delta}$ and $f$ satisfying
\begin{equation*}
f(s)\leq a-bs^r \ \ \text{for all}\ \ s\geq0 \ \ \text{with some}\ \ a\geq0,\ b>0 \ \text{and}\ r\geq 1,
\end{equation*}
Li and Xiang \cite{li2016attraction} obtained that the boundedness of solutions was determined specially under the assumption that $n\geq2, r>\frac{1}{2}\left(\sqrt{n^2+4n}-n+2\right)$. Thus, owing to the effect of repulsion, the exponent $r$ is allowed to take values less than 2 such that the solution remains uniformly bounded in time. Wang et al. \cite{wang2018positive} proved the solution to the corresponding fully parabolic system is globally bounded when
\begin{equation*}
n\leq3\ \ \text{or}\ \ r>r_n:=\left\{\frac{n+2}{4},\ \frac{n\sqrt{n^2+6n+17}-n^2-3n+4}{4} \right\}\ \ \text{with}\ \ n\geq2.
\end{equation*}
It is clearly $r_4=\frac{3}{2}$. More recently, Li and Wang \cite{li2018boundedness} found that problem \eqref{xp1} with $\tau=0, \chi\tilde{\alpha}=\xi\tilde{\gamma}$ and $\tilde{\beta}\not=\tilde{\delta}$ possesses a globally bounded and classical solution if $r=\frac{3}{2}$.

There are also many works about problem \eqref{1jl} with $f(u)$ being a damping source term in order to improve the system's consistency with biological reality or adapt it to complex biological situations. It is well known that an appropriate logistic damping can prevent blow-up of solutions to the classical Keller-Segel system
\begin{eqnarray}{\label{1.6}}
\left\{
\begin{array}{llll}
U_t=\epsilon\Delta U-\chi\nabla\cdot(U\nabla V)+\tilde{r}U-\tilde{\mu} U^2,\quad &x\in \Omega,\quad t>0,\\
\kappa V_t=\Delta V-V+U,\quad &x\in\Omega,\quad t>0.
\end{array}
\right.
\end{eqnarray}
When $\kappa=0$, if either $n\leq 2$ and $\tilde{\mu}>0$ is arbitrary, or if $n\geq3$ and $\tilde{\mu}$ suitable lager, then for all reasonably regular initial data model \eqref{1.6} possesses a globally bounded classical solution \cite{tello2007chemotaxis}. Moreover, the effect of logistic damping is stronger than that of chemotactic aggregation when $\tilde{\mu}\geq\frac{n-2}{n}\chi$ \cite{kang2016blowup,xiang2019dynamics,tello2007chemotaxis}. For the case of $\kappa=1$, Winkler \cite{winkler2010boundedness} showed that if $\mu$ is sufficiently large then problem \eqref{1.6} possesses a unique bounded solution, and there are also further progresses in this direction, one can see \cite{lin2016global,xiang2018strong}. Especially, for any $\tilde{\mu}>0$ when $n=3$ and the domain $\Om$ is convex, it is known from \cite{Chemotaxis2} that blow-up is not possible in the weak solution sense. In drastic contrast to \eqref{1.6}, Hu and Tao \cite{hu2016to} proved an arbitrarily small quadratic degradation term is sufficient to suppress any blow-up phenomenon in problem \eqref{1jl} when $n=3$ and $d=0$, they further showed the solution exponentially stabilizes to the constant stationary solution if $\mu$ suitable lager. The boundedness result got in \cite{hu2016to} had been improved in \cite{lia2018boundedness} to a general case, wherein $f(u)=u-u^\theta$ is considered and the solution exists bounded when $n\geq2$ and $\theta>\frac{n}{2}$. Ren and Liu \cite{Ren2020} proved that the solution is bounded if $n\geq3$ and $\theta=\frac{n}{2}$. Lv and Wang \cite{lv2020global} studied a chemotaxis system with signal-dependent motility, indirect signal production and generalized logistic source, and they established the global existence of the solution under some assumptions on the parameters of the logistic source. One can refer to  \cite{fuest2019analysis,liu2020global,noda2019global,Nakaguchi2020,qiu2020boundedness,qiu2018boundedness,surulescu2019does,tang2020,zheng2020boundedness} for further extensions of related chemotaxis models involving ODE-type indirect signal production mechanism and damping source.

For the PDE-type indirect signal production systems with damping source, Zhang el at. \cite{zhang2019large} claimed that if $f(u)=\mu(u-u^\theta)$ and $\theta>\frac{n}{4}+\frac{1}{2}$, then the solution is globally bounded. This boundedness conclusion still holds for a chemotaxis system with indirect signal production and rotational sensitivity \cite{dong2021global}. The authors in \cite{wangwei2019,zhang2020asymptotic} considered the corresponding quasilinear fully parabolic chemotaxis system with indirect signal production and logistic source, and they obtained the global boundedness of solutions under some assumptions on parameters of self-diffusion, cross-diffusion and logistic source.

\subsection{Main results}

Let us return the attention to problem \eqref{1} and give the main results of this paper. From the existing papers as mentioned above, we know the indirect production mechanism is helpful for the global existence of solutions, and it is well known that the damping source is also a main method to avoid the chemotactic collapse. Problem \eqref{1} is provided with these two characteristics at the same time, as far as we know, thus there is no any blow-up result or unbound phenomenon for it. In fact, even for the classical chemotaxis-growth system \eqref{1.6}, the analysis of this aspect is incomplete.

When $\kappa=0, \tilde{\mu}\in(0, 1)$, Winkler \cite{Winkler2014How} considered dynamical behaviors of model \eqref{1.6} with $n=\chi=1$, and he claimed that if $\|u_0\|_{L^p(\Om)}$ is sufficiently large, where $p>\frac{1}{1-\tilde{\mu}}$, then there exists $T>0$ such that to each $M>0$ there corresponds some $\epsilon_0(M)>0$ with the property that for any $\epsilon\in(0, \epsilon_0(M))$ one can find $t_\epsilon\in(0, T)$ and $x_\epsilon\in\Om$ such that the classically solution satisfies
\begin{equation}\label{xsz}
U(x_\epsilon, t_\epsilon)>M.
\end{equation}
The authors in \cite{Chemotaxis} extended this result to higher dimensional and radially symmetric case. In addition, under the radially symmetric assumptions on physical region and initial data we \cite{xu2020carrying} investigated the chemotaxis models with Lotka-Volterra type competitive species and gained the property \eqref{xsz} for the corresponding solutions recently. For more results on the property like \eqref{xsz} with related fully parabolic chemotaxis models under the radially symmetric assumptions, one can see \cite{li2019emergence,2020The,winkler2017emergence,wang2019the,2020Theyang}.

However, to the best of our knowledge, there is no any result about such transient growth phenomena for problem \eqref{1}. In this paper, we thus aim to study this dynamic behavior of solution to problem \eqref{1} with non-radial symmetry assumption. Unlike the outcomes for the classical Keller-Segel system \eqref{1.6} with $\tau=0$ obtained in \cite{Chemotaxis,Winkler2014How}, we shall establish the corresponding conclusions for problem \eqref{1} in a general bound domain, so it is an improvement on the basis of \cite{Chemotaxis,Winkler2014How}.

Say concretely, we shall consider problem \eqref{1} with applying the viscosity vanishing method. In this end, we need further study the corresponding hyperbolic-elliptic-elliptic model which may be seen as an auxiliary problem and can be formally obtained by letting $\epsilon\rightarrow 0$,
\begin{equation}\label{2}
\left\{
\begin{array}{ll}
u_t=-\nabla\cdot(u\nabla v)+ru-\mu u^\theta,\quad &x\in \Omega,\quad t>0,\\
0=d_1\Delta v-\beta v+\alpha w,\quad &x\in \Omega,\quad t>0,\\
0=d_2\Delta w-\delta w+\gamma u,\quad &x\in\Omega,\quad t>0,\\
\ds\frac{\partial u}{\partial\nu }=\frac{\partial v}{\partial \nu}=\frac{\partial w}{\partial\nu }=0,\quad &x\in\partial\Omega,\quad t>0,\\
u(x, 0)=u_0\, \,\quad &x\in\Omega.
\end{array}
\right.
\end{equation}
Then, we consider the so-called strong $W^{1,q}$-solution $(u, v, w)$ (see \cite{Winkler2014How}) to problem \eqref{2}, which can be defined as following sense:

\begin{definition}\label{21}
Let $r, \mu, d_1, \alpha, \beta, d_2, \gamma, \delta>0$ and $\theta>1, T^*\in(0, +\infty], q>n$. A strong $W^{1,q}$-solution of problem \eqref{2} in $\Omega\times(0, T^*)$ is the nonnegative function $u\in C(\bar\Omega\times[0, T^*))\cap L_{loc}^\infty([0, T^*); W^{1,q}(\Omega))$ and $v, w\in C^2(\bar\Omega\times[0, T^*))$
such that $u$ satisfies
\begin{equation}\begin{split}\label{13}
-\int_0^{T^*}\int_\Om u\varphi_t-\int_\Om u_0\varphi(\cdot, 0)=\int_0^{T^*}\int_\Om u\nabla v\nabla\varphi+\int_0^{T^*}\int_\Omega (ru-\mu u^\theta)\varphi
\end{split}\end{equation}
for any $\varphi\in C_0^\infty([0, T^*); \bar\Omega)$, and $v, w$ classically solves
\begin{eqnarray*}
\left\{
\begin{array}{ll}
0=d_1\Delta v-\beta v+\alpha w,\quad &(x, t)\in\Omega\times(0, T^*)\\
0=d_2\Delta w-\delta w+\gamma u,\quad &(x, t)\in\Omega\times(0, T^*)\\
\ds\frac{\partial v}{\partial \nu}=\frac{\partial w}{\partial \nu}=0,\quad &(x, t)\in\partial\Omega\times(0, T^*).
\end{array}
\right.
\end{eqnarray*}
\end{definition}

In fact, with means of a compactness argument we shall prove that the solution of problem \eqref{1} converges to the strong $W^{1, q}$-solution of \eqref{2} as $\epsilon\rightarrow 0$. More importantly, by means of a logarithmic Sobolev inequality we can built the result in a general physical domain. Specifically, we can ensure that problem \eqref{2} is locally well posed in the following sense.

\begin{theorem}\label{jbczs}
Let $\Om$ be a bounded region and $u_0\in W^{1, q}(\Om), q>n$. Then there exists a constant $T^*\in(0, +\infty]$ and problem \eqref{2} admits an unique strong $W^{1,q}$-solution $(u, v, w)$ with $u\in C(\bar\Omega\times[0, T^*))\cap L_{loc}^\infty([0, T^*); W^{1,q}(\Omega))$ and $v, w\in C^{2, 0}(\bar\Omega\times[0, T^*))$. Moreover,
\begin{equation*}
\mbox{either}\ \ T^*=+\infty \ \ \mbox{or}\ \ \limsup_{t\rightarrow T^*} \|u(\cdot, t)\|_{L^\infty(\Om)}=+\infty,
\end{equation*}
and for any $t\in(0, T^*)$ we have
\begin{equation}\begin{split}\label{hy}
\beta\int_\Om v(\cdot, t)=\alpha\int_\Om w(\cdot, t)&=\frac{\alpha\gamma}{\delta}\int_\Omega u(\cdot, t)\leq\frac{\alpha\gamma}{\delta}m_1,
\end{split}\end{equation}
where
\begin{equation}\label{m12d}
m_1:=\max\left\{\frac{1}{|\Om|}\int_\Om u_0,\ \ \left(\frac{r}{\mu}\right)^{\frac{1}{\theta-1}}\right\}.
\end{equation}
\end{theorem}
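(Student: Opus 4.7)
The plan is to reduce problem \eqref{2} to a single nonlocal transport equation for $u$ and to construct the solution by a fixed-point argument, following the method-of-characteristics approach that Winkler used in \cite{Winkler2014How}, but adapted to a non-radial general domain via a logarithmic Sobolev inequality. First I would observe that once $u$ is prescribed, the two elliptic equations determine $w$ and then $v$ uniquely: by $L^p$ and Schauder theory applied to $-d_2\Delta w+\delta w=\gamma u$ and $-d_1\Delta v+\beta v=\alpha w$, one has quantitative bounds of the form
\begin{equation*}
\|w\|_{W^{2,p}(\Om)}\leq C\|u\|_{L^p(\Om)},\qquad \|v\|_{W^{2,p}(\Om)}\leq C\|u\|_{L^p(\Om)},\qquad 1<p<\infty,
\end{equation*}
so that for $q>n$ both $\nabla v$ and $\nabla w$ are Lipschitz in space once $u\in L^\infty(\Om)$. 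Plugging this back, the $u$-equation becomes the quasilinear hyperbolic equation
\begin{equation*}
u_t+\nabla v[u]\cdot\nabla u=u\bigl(-\Delta v[u]+r-\mu u^{\theta-1}\bigr).
\end{equation*}

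Next I would set up the iteration $\mathcal F:\bar u\mapsto u$ on the closed ball $X_{T,R}=\{u\in C([0,T];W^{1,q}(\Om))\cap L^\infty(\Om\times(0,T)):\|u\|_{L^\infty}+\|\nabla u\|_{L^q}\leq R\}$ by: given $\bar u$, first compute $v=v[\bar u]$ via the elliptic step above, then solve the linear inhomogeneous transport problem along characteristics $\dot X(t;x_0)=\nabla v(X(t;x_0),t)$, $X(0;x_0)=x_0$. Since $\nabla v$ is Lipschitz in $x$ and smooth in $t$, the flow exists up to $T$ and preserves $\bar\Om$ by the no-flux condition (here one uses $\partial_\nu v=0$ to reflect characteristics that reach the boundary). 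Along each characteristic the ODE $\dot U=U(-\Delta v+r-\mu U^{\theta-1})$ yields an explicit pointwise bound
\begin{equation*}
\|u(\cdot,t)\|_{L^\infty(\Om)}\leq \|u_0\|_{L^\infty(\Om)}\exp\Bigl(\int_0^t\bigl(r+\|\Delta v(\cdot,s)\|_{L^\infty}\bigr)ds\Bigr),
\end{equation*}
so the $L^\infty$ part of the norm is under control for small $T$.

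The main obstacle is the $W^{1,q}$-estimate on $u$ without radial symmetry. Differentiating the transport equation and testing with $|\nabla u|^{q-2}\nabla u$ yields
\begin{equation*}
\frac{1}{q}\frac{d}{dt}\|\nabla u\|_{L^q}^q\leq C\bigl(1+\|\nabla^2 v\|_{L^\infty}+\|u\|_{L^\infty}^{\theta-1}\bigr)\|\nabla u\|_{L^q}^q+C\|u\|_{L^\infty}\|\nabla\Delta v\|_{L^q}\|\nabla u\|_{L^q}^{q-1},
\end{equation*}
and the term $\|\nabla^2 v\|_{L^\infty}$ cannot be controlled by $\|u\|_{L^\infty}$ alone through pure Calderón–Zygmund. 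I would close the estimate by invoking a Brezis–Gallou\"et/Beale–Kato–Majda type logarithmic Sobolev inequality
\begin{equation*}
\|\nabla^2 v\|_{L^\infty(\Om)}\leq C\bigl(1+\|u\|_{L^\infty(\Om)}\log(e+\|u\|_{W^{1,q}(\Om)})\bigr),\qquad q>n,
\end{equation*}
applied to the elliptic problem for $v$. Combining this with the previous $L^\infty$-bound and a Grönwall argument of the form $y'\leq C(1+y\log(e+y))$, one excludes finite-time blow-up of $\|\nabla u\|_{L^q}$ on a small interval $[0,T]$ depending only on $\|u_0\|_{W^{1,q}}$, so $\mathcal F$ maps $X_{T,R}$ into itself for suitable $R,T$.

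Finally, for the contraction I would subtract two iterates: the difference $v_1-v_2$ solves a linear elliptic equation with datum $\gamma(u_1-u_2)$ and obeys $\|v_1-v_2\|_{W^{2,q}}\leq C\|u_1-u_2\|_{L^q}$, and plugging this into the difference of the two transport equations gives $\|\mathcal F(u_1)-\mathcal F(u_2)\|_{C([0,T];L^q)}\leq CT\|u_1-u_2\|_{C([0,T];L^q)}$, so $\mathcal F$ is a contraction for small $T$. Uniqueness of the strong $W^{1,q}$-solution follows similarly. The extensibility alternative is the standard continuation argument: as long as $\|u(\cdot,t)\|_{L^\infty}$ stays finite, the elliptic bootstrap gives enough regularity of $v,w$ to rerun the fixed-point construction from any later time. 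The identity \eqref{hy} is obtained by integrating the two elliptic equations over $\Om$ against $1$, using the Neumann conditions to kill the Laplacian terms, which yields $\delta\int_\Om w=\gamma\int_\Om u$ and $\beta\int_\Om v=\alpha\int_\Om w$; the bound on $\int_\Om u$ then follows by integrating the $u$-equation, applying Jensen's inequality $\int_\Om u^\theta\geq|\Om|^{1-\theta}(\int_\Om u)^\theta$, and comparing to the scalar ODE $y'=ry-\mu|\Om|^{1-\theta}y^\theta$, whose maximal value is $|\Om|m_1$.
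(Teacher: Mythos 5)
Your construction is essentially correct as a local existence scheme, but it takes a genuinely different route from the paper. The paper does not solve \eqref{2} directly: it obtains the strong $W^{1,q}$-solution as the vanishing-viscosity limit of the classical solutions of \eqref{1}. Concretely, Lemma \ref{144444} gives an $\epsilon$-independent bound of $u_\epsilon$ in $L^\infty([0,T];W^{1,q}(\Om))$ (using the same BMO-logarithmic Sobolev inequality \eqref{fyd} for $\|D^2v\|_{L^\infty}$ that you invoke), Lemma \ref{25} passes to the limit by an Aubin--Lions compactness argument, Lemma \ref{wyi} proves uniqueness by testing the weak identity \eqref{13} with Steklov-type time averages of $U(U^2+\eta)^{\frac{q}{2}-1}$ and applying Gronwall, and the extensibility alternative is obtained by gluing local solutions over the set $S$ of admissible existence times. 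Your characteristics-plus-Banach-fixed-point construction is self-contained and arguably more elementary, but it does not by itself produce the approximation property $(u_{\epsilon},v_{\epsilon},w_{\epsilon})\rightarrow(u,v,w)$, which is the whole point of the paper's detour: Theorem \ref{d1q} is deduced precisely from that convergence, so the compactness machinery of Lemma \ref{25} would still be needed elsewhere.

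Two caveats on your write-up. First, the contraction of $\mathcal{F}$ only yields uniqueness among the solutions produced by your iteration (those transported along characteristics); Theorem \ref{jbczs} asserts uniqueness in the full class of Definition \ref{21}, where $u$ is merely continuous with $\nabla u\in L^q$ and satisfies the integral identity \eqref{13}, so that $u_t$ is only a distribution and one cannot integrate along characteristics a priori. Closing this requires the test-function and Gronwall argument of Lemma \ref{wyi} (or a weak--strong uniqueness argument); ``uniqueness follows similarly'' is too quick. Second, your ODE comparison for the mass yields $\int_\Om u\le|\Om|\,m_1$ rather than $\int_\Om u\le m_1$; this in fact exposes a missing factor $|\Om|$ in \eqref{hy} as printed, so your computation is the correct one, but you should state the bound you actually prove.
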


Although the strong $W^{1,q}$-solution exists boundedness with $L^1(\Om)$-norm for all $(0, T^*)$ and problem \eqref{2} possesses the indirect signal production mechanism along with the logistic source, we will prove that there exists still a finite time blow-up phenomenon with $n\geq1$. Due to the lack of free diffusion of $u$, it should be pointed out that the blow-up result for problem \eqref{2} holds with non-radial symmetry setting and for all $n\geq1$. Concretely, we have following result.

\begin{theorem}\label{33}
Let $n\geq1$, $\Om$ be a bounded convex region, the parameters satisfy \eqref{csdy} with $\theta\in(1, 2]$. Then one can find suitably large constants $c_1, c_2>0$ with the following property: If the positive initial data $u_0\in W^{1, q}(\Om), q>n$ is suitably large such that
\begin{equation}\label{theta}
\|u_0\|^\theta_{L^\theta(\Om)}>c_1\cdot\max\left\{1,\ m_1^{\theta}\right\}+c_2,
\end{equation}
where $m_1$ is the positive constant given by \eqref{m12d}, then the strong $W^{1,q}$-solution of problem \eqref{2} blows up in finite time.
\end{theorem}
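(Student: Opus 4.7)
The plan is to derive a super-linear ordinary differential inequality for the functional $F(t) := \int_\Omega u^\theta(\cdot,t)\,dx$ and then invoke an ODE comparison argument together with the extensibility alternative in Theorem \ref{jbczs}. Since the strong $W^{1,q}$-solution satisfies $u \in L^\infty_{\mathrm{loc}}([0,T^*); W^{1,q}(\Omega))$ with $q > n$, I would first regularize $u^{\theta-1}$ by $(u+\eta)^{\theta-1}$ and legitimately use this factor as a test function in the weak formulation \eqref{13}. After integration by parts using the no-flux condition for $v$ and passing $\eta \downarrow 0$, this yields the energy identity
\begin{equation*}
\frac{1}{\theta}F'(t) \;=\; -\frac{\theta-1}{\theta}\int_\Omega u^\theta \Delta v + r F(t) - \mu\int_\Omega u^{2\theta-1}.
\end{equation*}

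The heart of the argument is the analysis of $-\int_\Omega u^\theta \Delta v$. Substituting $-d_1\Delta v = \alpha w - \beta v$ from the second equation of \eqref{2} and then eliminating $\alpha w$ via the third equation ($\delta w - d_2\Delta w = \gamma u$) produces the decomposition
\begin{equation*}
-\int_\Omega u^\theta \Delta v \;=\; \frac{\alpha\gamma}{d_1\delta}\int_\Omega u^{\theta+1} + \frac{\alpha d_2}{d_1\delta}\int_\Omega u^\theta \Delta w - \frac{\beta}{d_1}\int_\Omega u^\theta v.
\end{equation*}
The first summand is the super-linear driving force; the other two are losses. Here convexity of $\Omega$ becomes essential: it yields sharp $W^{2,p}$-estimates for the Neumann problems defining $v$ and $w$ (via the Grisvard-type identity $\int_\Omega(|D^2\psi|^2 - |\Delta\psi|^2)\leq 0$ for $\psi$ with $\partial_\nu \psi = 0$), so that combined with Hölder's inequality one obtains $\int_\Omega u^\theta v$ and $\left|\int_\Omega u^\theta \Delta w\right|$ bounded by $C \int_\Omega u^{\theta+1}$ with a constant $C$ strictly small enough to preserve a positive net coefficient. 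The upshot is
\begin{equation*}
-\int_\Omega u^\theta \Delta v \;\geq\; c_\star \int_\Omega u^{\theta+1} - C_\star,
\end{equation*}
where $c_\star>0$ and $C_\star \geq 0$ depend only on the parameters, $|\Omega|$, and $m_1$.

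Invoking the a priori bound $\|u(\cdot,t)\|_{L^1(\Omega)} \leq m_1|\Omega|$ from \eqref{hy} and interpolating between $L^1$ and $L^{\theta+1}$ yields
\begin{equation*}
\int_\Omega u^{\theta+1} \;\geq\; (m_1|\Omega|)^{-1/(\theta-1)}\, F(t)^{\theta/(\theta-1)},
\end{equation*}
while the hypothesis $\theta \in (1,2]$ ensures $2\theta - 1 \leq \theta + 1$, whence Hölder and Young absorb the damping, $\mu \int_\Omega u^{2\theta-1} \leq \tfrac{c_\star}{2}\int_\Omega u^{\theta+1} + C_{\mathrm{abs}}$. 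Collecting all of these estimates produces
\begin{equation*}
F'(t) \;\geq\; K_1 F(t)^{\theta/(\theta-1)} - K_2,
\end{equation*}
with explicit constants $K_1 > 0$ and $K_2 \geq 0$. Since $\theta/(\theta-1) > 1$, standard ODE comparison shows $F$ must blow up in finite time whenever $F(0) = \|u_0\|^\theta_{L^\theta(\Omega)}$ exceeds a threshold of the form $(K_2/K_1)^{(\theta-1)/\theta}$. Tracking the dependencies of $K_1, K_2$ on $m_1, |\Omega|$ and the system parameters yields positive constants $c_1, c_2$ such that \eqref{theta} is sufficient, and the extensibility alternative in Theorem \ref{jbczs} then forces $\|u(\cdot,t)\|_{L^\infty(\Omega)}$ to blow up at or before the blow-up time of $F$. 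The main obstacle is the control of the indefinite-sign cross term $\int_\Omega u^\theta \Delta w$ arising from the double indirect cascade: a naive application of $L^p$-elliptic regularity produces a loss constant that exactly matches the driving coefficient $\alpha\gamma/(d_1\delta)$, so convexity of $\Omega$ must be exploited in a genuinely quantitative way to beat this cancellation.
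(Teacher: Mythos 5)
Your overall architecture (a superlinear differential/integral inequality for $\int_\Om u^\theta$ combined with the extensibility criterion of Theorem \ref{jbczs}) matches the paper, but the step you yourself flag as ``the main obstacle'' is exactly where the argument breaks, and your proposed fix does not work. After substituting the two elliptic equations, the problematic term is $\frac{\alpha d_2}{d_1\delta}\int_\Om u^\theta\Delta w$. If you evaluate $\Delta w$ algebraically from the third equation, $\Delta w=(\delta w-\gamma u)/d_2$, its negative contribution is exactly $-\frac{\alpha\gamma}{d_1\delta}\int_\Om u^{\theta+1}$, cancelling the driving term identically; no $W^{2,p}$ estimate for $w$, however sharp, and no Grisvard-type convexity identity can shrink this coefficient, because it comes from the PDE itself, not from elliptic regularity. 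What survives the cancellation is $\frac{\alpha}{d_1}\int_\Om u^\theta w-\frac{\beta}{d_1}\int_\Om u^\theta v$, and a lower bound of the form $\int_\Om u^\theta w\ge c\int_\Om u^{\theta+1}$ is false in general: the double smoothing of the indirect cascade makes $w$ flat precisely where $u$ concentrates. So the asserted inequality $-\int_\Om u^\theta\Delta v\ge c_\star\int_\Om u^{\theta+1}-C_\star$ with $C_\star$ depending only on the parameters, $|\Om|$ and $m_1$ is not available, and the clean autonomous ODI $F'\ge K_1F^{\theta/(\theta-1)}-K_2$ does not follow.

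The paper's resolution is genuinely different. It integrates by parts, $\int_\Om u^\theta\Delta w=-\theta\int_\Om u^{\theta-1}\nabla u\cdot\nabla w$, absorbs $\int_\Om|\nabla w|^{\theta+1}$ into $\varepsilon\int_\Om u^{\theta+1}$ by elliptic regularity, and controls the remaining term $\int_\Om|\nabla u|^{\theta+1}$ --- for which no elliptic estimate exists, $u$ solving a first-order equation --- by the a priori bound $\|u(\cdot,t)\|_{W^{1,\theta+1}(\Om)}^{\theta+1}\le m(t)$ supplied by Lemmata \ref{144444} and \ref{3.2}. The function $m(t)$ degenerates at the specific time $T_0$ defined in \eqref{coms}, but $\int_0^{T_0}m(t)\,dt\le\pi/(4C)$; this is why the proof must run as a contradiction against $T_0$, why the resulting inequality for $y(t)=\int_\Om u^\theta$ is an integral inequality with a fixed initial deficit (handled by Lemma \ref{31}) rather than a pointwise ODE, and why the constant $c_2=(\theta-1)C_3\pi/(4C)$ appears in the largeness condition \eqref{theta}. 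Convexity enters only through Lemma \ref{144444}(ii), to force $\partial_\nu|\nabla u|^2\le0$ on $\partial\Om$ so that this $W^{1,q}$ bound is uniform in $\epsilon$ --- not through sharpened Calder\'on--Zygmund constants. Without some substitute for this control of $\int_\Om|\nabla u|^{\theta+1}$, your proposal has a genuine gap at its central step.
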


\begin{remark}
To the best of our knowledge, for the chemotaxis systems with both of the ODE- and PDE-types indirect signal production as mentioned above, the existence of finite time blow-up solutions has been still open. For all $n\geq1$, Theorem \ref{33} provides a finite time blow-up result for problem \eqref{2} with non-radial symmetry setting although there exist the logistic source, to a certain degree, this phenomenon reflects how important the free diffusion of $u$ is for the dynamic behavior of the solution to the corresponding chemotaxis model.
\end{remark}

We then obtain the transient growth phenomena to problem \eqref{1} by means of a continuous dependence argument.

\begin{theorem}\label{d1q}
Let $n\geq1$, $\Om$ be a bounded convex region, $M$ be an arbitrary positive constant, the parameters satisfy \eqref{csdy} with $\theta\in(1, 2]$. If there exists a positive constant $\epsilon_0(M)$ such that $\epsilon\in(0,\ \varepsilon_0(M))$ and the initial data satisfy \eqref{cszjs} and \eqref{theta}, then for the classical solution of problem \eqref{1} one can find some points $\left(\tilde{x}, \tilde{t}\right)$ such that
\begin{equation}\label{cl}
u\left(\tilde{x}, \tilde{t}\right)>M.
\end{equation}
\end{theorem}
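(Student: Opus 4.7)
The proof will be a direct consequence of the finite-time blow-up in the limit problem (Theorem~\ref{33}) together with the viscosity vanishing convergence established earlier in the paper. Under the hypotheses of the theorem, Theorem~\ref{33} ensures that the strong $W^{1,q}$-solution $(u,v,w)$ of \eqref{2} emanating from $u_0$ exists on a maximal interval $[0,T^*)$ with $T^*<+\infty$, and then the extensibility criterion in Theorem~\ref{jbczs} forces $\limsup_{t\to T^*}\|u(\cdot,t)\|_{L^\infty(\Om)}=+\infty$. Since $u\in C(\bar\Om\times[0,T^*))$ by Definition~\ref{21}, for any prescribed $M>0$ I can select a time $t^\star\in(0,T^*)$ and a point $\tilde x\in\bar\Om$ at which $u(\tilde x,t^\star) > 2M$. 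This step essentially converts the blow-up information into a concrete large pointwise value at an interior time strictly before $T^*$.

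With $t^\star$ and $\tilde x$ frozen, the plan is to invoke the viscosity vanishing convergence result proved earlier in the paper: as $\epsilon\to 0$, the classical solution $u_\epsilon$ of \eqref{1} converges to the strong $W^{1,q}$-solution $u$ of \eqref{2}, uniformly on $\bar\Om\times[0,t^\star]$. This uniform convergence is meaningful precisely because $t^\star < T^*$, so the compactness estimates used in the vanishing viscosity argument hold on $[0,t^\star]$ with bounds independent of $\epsilon$. The Sobolev embedding $W^{1,q}(\Om)\hookrightarrow C(\bar\Om)$ afforded by $q>n$ is what upgrades the $W^{1,q}$-type convergence to the $L^\infty$-convergence needed to compare values at $(\tilde x,t^\star)$. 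Consequently, there exists $\epsilon_0=\epsilon_0(M)>0$ such that, for every $\epsilon\in(0,\epsilon_0)$, $\|u_\epsilon-u\|_{L^\infty(\Om\times[0,t^\star])} < M$, whence
\begin{equation*}
u_\epsilon(\tilde x,t^\star) > u(\tilde x,t^\star) - M > M,
\end{equation*}
producing the desired point $(\tilde x,\tilde t)=(\tilde x,t^\star)$.

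The main obstacle, and therefore the step I would expect to require the most care, is the passage from the viscosity vanishing convergence to a truly pointwise comparison at $(\tilde x, t^\star)$. One must verify that the classical solution $u_\epsilon$ of \eqref{1} actually exists on $[0,t^\star]$ uniformly for all sufficiently small $\epsilon$, and that the limit is attained in a topology strong enough to separate values of $u$ larger than $2M$ from values of $u_\epsilon$ possibly below $M$. Both of these points should already be furnished by the a priori estimates underpinning the viscosity vanishing part of the paper, which by construction are valid on any compact subinterval strictly inside the existence interval of the limit hyperbolic-elliptic-elliptic solution. Once the convergence is anchored in this way, the conclusion reduces to the elementary continuity inequality written above, and no further technicalities are needed.
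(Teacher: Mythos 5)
Your proposal is correct in substance and rests on exactly the two pillars the paper uses -- the finite-time blow-up of the strong $W^{1,q}$-solution (Theorem~\ref{33}) and the vanishing-viscosity convergence -- but you run the argument directly, whereas the paper argues by contradiction. The paper assumes $u_\epsilon\leq M$ on $\Om\times(0,+\infty)$ for all small $\epsilon$, feeds this bound straight into Lemma~\ref{25} (whose hypothesis is precisely a uniform $L^\infty$ bound on the approximating solutions $u_{\epsilon_j}$), identifies the limit with the unique strong solution via Lemma~\ref{wyi}, and concludes $u\leq M$ on $\Om\times(0,T)$, contradicting Theorem~\ref{33}. This framing is not purely cosmetic: the negated conclusion is literally the hypothesis of Lemma~\ref{25}, so no further uniform estimate needs to be checked. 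Your direct version instead picks $t^\star<T^*$ with $u(\tilde x,t^\star)>2M$ and needs $u_\epsilon\to u$ in $C(\bar\Om\times[0,t^\star])$ \emph{without} assuming any a priori $L^\infty$ bound on $u_\epsilon$; this is the one step where you assert rather than cite. It does go through, because the uniform-in-$\epsilon$ bound \eqref{2.5} of Lemma~\ref{144444} holds up to the time $T_0$ controlled by $\|u_0\|_{W^{1,q}(\Om)}$ alone, $T^*$ does not exceed that time by the construction in Theorem~\ref{jbczs}, and $q>n$ upgrades the $W^{1,q}$ bound to an $L^\infty$ bound guaranteeing $T_{\max}>t^\star$ for the classical solutions; but you would need to spell this out, since neither Lemma~\ref{25} (which assumes the $L^\infty$ bound) nor Lemma~\ref{3.2} (which only reaches some possibly small $T(D)$) states the convergence on an arbitrary $[0,t^\star]\subset[0,T^*)$ verbatim. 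The trade-off is that your route yields slightly more information (a concrete time $t^\star<T^*$ at which the threshold is exceeded), at the cost of this extra verification that the paper's contrapositive formulation sidesteps.
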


\begin{remark}
We mainly focus on the effects of $\epsilon$ and the size of $u_0$ on the behaviors of the classical solution of problem \eqref{1}, and the main contribution of Theorem \ref{d1q} is that it provide the property \eqref{cl} to the chemotaxis system with indirect signal production mechanism: In a general convex domain, if the initial data is sufficiently lager with the fixed coefficient, if further the species do not diffuse too fast, i.e. $\epsilon$ is suitable small, then we can make sure the existence of some time up to which any threshold of the species density will be surpassed.
\end{remark}

\begin{remark}\label{r3}
Let us explain the assumption on $\epsilon$ in Theorem \ref{d1q}. In order to give a convergence between the classical solution of problem \eqref{1} and the $W^{1, q}$-solution of problem \eqref{2} (see Lemma \ref{25}), we need a boundedness (independent of $\epsilon$) of $u$ in $L^\infty([0, T]; W^{1, q}(\Om))$ (see Lemma \ref{144444}), in this process we have to give a upper bound on $\epsilon$ to handle the boundary integration. Moreover, as the same requirements in \cite[Lemma 3.4]{Winkler2014How} and \cite[Lemma 3.10]{Chemotaxis}, an upper bound of $\epsilon$ is also required for problem \eqref{1} to get an uniformly bounded (independent of $\epsilon$) estimate for the time derivative. Furthermore, from a biological point of view, this smallness assumption on the free diffusion rate $\epsilon$ obviously will be of benefit to the aggregation behavior of the system. On the other hand, although $\theta\in(1, 2]$, due to the smallness assumption on $\epsilon$, we can see that the property \eqref{cl} does not conflicts any boundedness results about problem \eqref{1} mentioned above, since the free diffusion rate considered therein is fixed for a positive constant.
\end{remark}

\begin{remark}
Theorem \ref{d1q} extends the classical Keller-Segel growth-type model considered in \cite{Chemotaxis,Winkler2014How} to problem \eqref{1}, and removes the radially symmetric assumptions on $\Om$ and initial data which were needed in \cite{Chemotaxis,Winkler2014How}.
\end{remark}

Assume $v(x,t):=\chi z(x, t)-\xi w(x, t), \xi\gamma=\chi\alpha$, then the following attraction-repulsion chemotaxis system
\begin{eqnarray}{\label{xp1ca}}
\left\{
\begin{array}{llll}
u_t=\epsilon\Delta u-\chi \nabla\cdot(u\nabla z)+\xi \nabla\cdot(u\nabla w)+ru-\mu u^\theta,\  &x\in \Omega,\ t>0,\\
0=d_1\Delta z+\alpha z-\beta z,\  &x\in \Omega,\  t>0,\\
0=d_2\Delta w+\gamma u-\delta w,\  &x\in\Omega,\  t>0,
\end{array}
\right.
\end{eqnarray}
becomes to
\begin{eqnarray*}
\left\{
\begin{array}{llll}
u_t=\epsilon\Delta u-\nabla\cdot(u\nabla v)+ru-\mu u^\theta, &x\in \Omega,\ t>0,\\
0=d_1\Delta v-\beta v+\bar{\alpha} w, &x\in \Omega,\ t>0,\\
0=d_2\Delta w-\delta w+\gamma u, &x\in \Omega,\ t>0,
\end{array}
\right.
\end{eqnarray*}
where $\bar{\alpha}=\xi(\frac{d_1\delta}{d_2}-\beta)$, so as a by-product, we can see from Theorem \ref{d1q} that the transient growth phenomena is also true for the attraction-repulsion chemotaxis model.

\begin{corollary}
For the classical solution of the corresponding initial-boundary value problem \eqref{xp1ca}, let $n\geq1$, $\Om$ be a bounded convex region, $\theta\in(1, 2], \chi, \xi, r, \mu, d_1, d_2, \beta, \delta, \gamma>0$ with
\begin{equation*}
\xi\gamma=\chi\alpha,\  \ d_1\delta\not=d_2\beta,
\end{equation*}
if for any positive constant $M$, there exists a positive constant $\epsilon_0(M)$ such that $\epsilon\in(0,\ \varepsilon_0(M))$ and the initial data satisfy \eqref{cszjs} and \eqref{theta}, in this setting the positive constants $c_1, c_2$ depends on $\chi, \xi, d_1, d_2, \beta, \gamma, \delta, \mu, n$ and $\Om$, then the property \eqref{cl} is valid.
\end{corollary}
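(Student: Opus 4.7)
The plan is to exploit the linearity of the two elliptic equations in \eqref{xp1ca} by introducing the auxiliary signal $v := \chi z - \xi w$, which transforms the attraction-repulsion system into a system of exactly the form \eqref{1}, after which Theorem \ref{d1q} can be invoked after a recalibration of the constants. First I would substitute $v = \chi z - \xi w$ into \eqref{xp1ca}: by linearity of the divergence, the combined flux $-\chi\nabla\cdot(u\nabla z) + \xi\nabla\cdot(u\nabla w)$ collapses to $-\nabla\cdot(u\nabla v)$, so the $u$-equation is already of the desired form. For the equation on $v$, apply $d_1\Delta$ to $\chi z - \xi w$ and eliminate $\Delta z$ and $\Delta w$ using the two elliptic equations; the compatibility condition $\xi\gamma = \chi\alpha$ is precisely what is needed to cancel the resulting $u$-contributions, and the algebra yields $0 = d_1\Delta v - \beta v + \bar\alpha w$ with $\bar\alpha = \xi(d_1\delta/d_2 - \beta)$. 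The hypothesis $d_1\delta \neq d_2\beta$ ensures $\bar\alpha \neq 0$, so the indirect coupling in the reduced system is non-degenerate; the no-flux condition on $v$ is inherited from those on $z$ and $w$, and the third equation of \eqref{1} coincides with the repulsion equation of \eqref{xp1ca}. Hence $(u,v,w)$ solves a problem of the form \eqref{1} with $\alpha$ replaced by $\bar\alpha$.

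Next I would check that Theorems \ref{jbczs}, \ref{33} and \ref{d1q} all go through when $\alpha$ is replaced by a nonzero (possibly negative) $\bar\alpha$. The sign of $\bar\alpha$ enters the system only through the term $\bar\alpha w$ in the elliptic equation for $v$; the chemotactic nonlinearity $\nabla\cdot(u\nabla v)$, the mass identity \eqref{hy}, and the subsequent Helmholtz-type regularity bounds for $v$ in terms of $w$ all depend only on $|\bar\alpha|$. When $\bar\alpha < 0$ one may additionally replace $v$ by $-v$ to restore the attractive sign in the coupling, without affecting the convection term since the flux is divergence-structured. Consequently, each occurrence of $\alpha$ in the thresholds $c_1,c_2$ appearing in \eqref{theta} is replaced by $|\bar\alpha|$, yielding the announced dependence on $\chi,\xi,d_1,d_2,\beta,\gamma,\delta$ (in addition to the original dependence on $\mu,n,\Omega$). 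Applying Theorem \ref{d1q} to the reduced triple under the assumptions $\epsilon\in(0,\epsilon_0(M))$ and \eqref{theta} then produces the point $(\tilde x,\tilde t)$ with $u(\tilde x,\tilde t) > M$, which is \eqref{cl}.

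The main obstacle I anticipate is a careful sign check in the blow-up argument of Theorem \ref{33}: if at some step the proof invokes the pointwise sign $v \geq 0$ (for instance in a moment estimate involving $\int_\Omega u v$ or in a convexity-based test-function computation), then the repulsion-dominated regime $d_1\delta < d_2\beta$, corresponding to $\bar\alpha < 0$, is not handled by the naive sign flip above and would require a separate auxiliary functional built from $(u,-v,-w)$. If, as is likely given the $L^\theta$-form of the initial-data threshold \eqref{theta}, the blow-up criterion is phrased purely in terms of a norm of $u$ and only uses the magnitude of the chemotactic flux through Helmholtz-type estimates, then the corollary follows at once from Theorem \ref{d1q} applied to the reduced system.
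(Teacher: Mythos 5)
Your core argument is exactly the paper's: the paper gives no separate proof of this corollary beyond the displayed reduction immediately preceding it, namely setting $v=\chi z-\xi w$, using $\xi\gamma=\chi\alpha$ to collapse the two fluxes into $-\nabla\cdot(u\nabla v)$ and the two elliptic equations into $0=d_1\Delta v-\beta v+\bar\alpha w$ with $\bar\alpha=\xi\left(\frac{d_1\delta}{d_2}-\beta\right)$, and then invoking Theorem \ref{d1q} with $\alpha$ replaced by $\bar\alpha$. Your first paragraph reproduces this verbatim, including the role of $d_1\delta\neq d_2\beta$ in ensuring $\bar\alpha\neq0$.

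One sub-claim in your second paragraph is wrong, however: replacing $v$ by $-v$ does \emph{not} leave the convection term unaffected. The flux being in divergence form is irrelevant here; $-\nabla\cdot(u\nabla v)$ becomes $+\nabla\cdot(u\nabla\tilde v)$ under $\tilde v=-v$, so the sign flip converts attraction into repulsion rather than restoring the form of \eqref{1}. Consequently the case $\bar\alpha<0$ (i.e.\ $d_1\delta<d_2\beta$) is genuinely not covered by Theorem \ref{d1q}, whose blow-up input (Theorem \ref{33}) relies on the strictly positive term $\frac{\alpha\gamma}{d_1\delta}\int_\Om u^{\theta+1}$ in \eqref{ni}; with $\bar\alpha<0$ that term changes sign and the superlinear growth driving Lemma \ref{31} disappears. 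Your final paragraph correctly senses this obstacle, and to your credit the paper itself is silent on it: it assumes only $d_1\delta\neq d_2\beta$, which guarantees $\bar\alpha\neq0$ but not $\bar\alpha>0$, so the corollary as stated really only follows in the attraction-dominated regime $d_1\delta>d_2\beta$. Apart from the erroneous sign-flip remark, your route and the paper's coincide.
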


\begin{remark}\label{rv}
Also, like in Remark \ref{r3}, in view of the smallness assumption on $\epsilon$, we point out that property \eqref{cl} for the attraction-repulsion chemotaxis model does not contradicts any boundedness results about the corresponding problem \eqref{xp1ca}.
\end{remark}

We would like to mention the results got in \cite{winkler2011blow,winkler2018finite} which convey to us that chemotactic collapses are still possible even in the presence of superlinear growth restrictions. How this singularity property can be constructed for problem \eqref{1}? We leave this open problem as a future investigation.

The rest of this paper is organized as follows. In Section 2, we give some known results concerning local existence of solution to problem \eqref{1} as well as some useful preliminary Lemmata. In Section 3, we consider the local well-pose for the corresponding hyperbolic-elliptic-elliptic model \eqref{2}, and then give the proofs of our main theorems.

\section{Preliminaries}

As a preliminary, we state firstly the following local well-posedness of classical solution to problem \eqref{1}. As to problem \eqref{1}, the local existence and uniqueness of solutions are established by Banach's fixed point theorem and well-known parabolic regularity theory in the same way as in for example \cite{Bellomo2015Toward,horstmann20031970,winkler2010boundedness,wrzosek2006long-time}.

\begin{lemma}{\label{3.1}}
For any parameters satisfy \eqref{csdy} and positive initial value satisfying \eqref{cszjs}, there exists a $T_{\max}\in (0, +\infty]$ and unique positive classical solution to problem \eqref{1} in $\Omega\times(0, T_{\max})$. We further have the extensibility criterion: either $T_{\max}=+\infty$, or
\begin{equation*}
\lim_{t\rightarrow T_{\max}}\|u(\cdot,t)\|_{L^{\infty}(\Omega)}=+\infty.
\end{equation*}
\end{lemma}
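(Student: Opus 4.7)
The plan is a Banach fixed point argument in a suitable subspace of $C([0,T];W^{1,q}(\Om))$, exploiting the indirect structure to decouple the two elliptic equations from the parabolic one. First I would, for any fixed $u$ in a bounded ball of $X_T:=C([0,T];W^{1,q}(\Om))$, solve the Neumann elliptic problems in sequence: standard $L^q$-elliptic regularity yields a unique $w(\cdot,t)\in W^{2,q}(\Om)$ with $\|w(\cdot,t)\|_{W^{2,q}}\leq C\|u(\cdot,t)\|_{L^q}$, and then, since the source for the $v$-equation is already in $W^{2,q}$, a unique $v(\cdot,t)\in W^{4,q}(\Om)\hookrightarrow C^2(\bar\Om)$ with $\|\nabla v\|_{L^\infty}+\|\Delta v\|_{L^\infty}\leq C\|u\|_{L^q}$. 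Here $q>n$ makes the Sobolev embedding available at both stages.

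With $v$ so determined, the first equation becomes a linear parabolic PDE for a new unknown $\tilde u$ with smooth coefficients and a polynomial forcing:
\[
\tilde u_t=\epsilon\Delta\tilde u-\nabla\cdot(\tilde u\nabla v)+r\tilde u-\mu u^\theta,\qquad \tfrac{\partial\tilde u}{\partial\nu}\Big|_{\partial\Om}=0,\qquad \tilde u(\cdot,0)=u_0.
\]
Representing $\tilde u$ through the Duhamel formula associated with the Neumann heat semigroup $(e^{\epsilon t\Delta})_{t\geq 0}$ and invoking the standard smoothing estimate $\|\nabla e^{\epsilon t\Delta}f\|_{L^q}\leq C(\epsilon t)^{-1/2}\|f\|_{L^q}$, together with $W^{1,q}\hookrightarrow L^\infty$ to control $\mu u^\theta$, a routine estimate shows that the map $\Phi:u\mapsto\tilde u$ sends a closed ball in $X_T$ into itself and is a strict contraction there whenever $T$ is small enough in terms of $\|u_0\|_{W^{1,q}(\Om)}$ and the data.

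A fixed point of $\Phi$ produces a mild solution $u$, which a parabolic Schauder bootstrap promotes to a classical solution thanks to the $C^2(\bar\Om)$-regularity of $v$ already secured above. Positivity then follows from the weak/strong maximum principle applied to the linear equation satisfied by $u$ with the smooth advective coefficient $\nabla v$ and the initial condition $u_0>0$; uniqueness is inherited from the contraction property. The extensibility criterion is the usual continuation argument: if $T_{\max}<+\infty$ while $\|u(\cdot,t)\|_{L^\infty(\Om)}$ remained bounded on $[0,T_{\max})$, an $L^p$-parabolic estimate combined with the above elliptic bounds would give a uniform $W^{1,q}$-bound up to $T_{\max}$, so the fixed-point construction could be reinitialized from some $t_0<T_{\max}$ on a time interval whose length depends only on this bound, producing an extension past $T_{\max}$ and contradicting maximality.

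The one item requiring care is the quasilinear chemotaxis term $\nabla\cdot(\tilde u\nabla v)$: in the $W^{1,q}$-norm, its differentiation yields $\nabla\tilde u\cdot\nabla v$ and $\tilde u\,\Delta v$, which must be placed in $L^q$ using $L^\infty$-control of both $\nabla v$ and $\Delta v$. Fortunately, the indirect production mechanism is remarkably benevolent at the local level — applying elliptic regularity \emph{twice} already buys two full orders of regularity gain, so $u\in L^q$ alone places $\nabla v,\Delta v$ into $L^\infty$ uniformly. The argument therefore reduces, as the authors indicate, to a standard verification that the ball radius and $T$ can be chosen so that Lipschitz constants in both directions are strictly less than one.
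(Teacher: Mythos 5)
Your argument is correct and is precisely the route the paper itself indicates: the paper gives no proof of Lemma \ref{3.1} but simply invokes Banach's fixed point theorem together with standard parabolic and elliptic regularity theory, citing the usual references, and your sketch (decoupling the two elliptic equations via $L^q$-regularity, Duhamel with the Neumann heat semigroup, contraction on a small time interval, Schauder bootstrap, maximum principle for positivity, and continuation for the extensibility criterion) is a faithful execution of exactly that standard scheme. No gaps worth flagging beyond the routine technicality of extending $s\mapsto s^\theta$ off the nonnegative axis before positivity is established.
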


Throughout the sequel, it will not be mentioned any further that $C>0$ is a generical constant although they can change line by line.

We recall following maximal regularity of elliptic problem, which can be found in \cite[Lemma 4]{Chemotaxis}, or see \cite{4,gilbarg2015elliptic} for its' detailed proof.

\begin{lemma}\label{2ly}
For any $f\in C(\bar\Om)$, the solution $\phi$ of following problem
\begin{eqnarray*}
\left\{
\begin{array}{ll}
-\Delta\phi+\phi=f,\quad &x\in \Omega,\\
\ds\frac{\partial \phi}{\partial\nu }=0,\quad &x\in\partial\Omega,
\end{array}
\right.
\end{eqnarray*}
satisfies $\|\phi\|_{L^{p}(\Om)}\leq\|f\|_{L^{p}(\Om)}$ for all $p\in[1, \infty]$. Moreover, for any $q\geq 1, \iota>0$, there is a constant $C_*>0$ such that $\|v\|_{W^{2,q}(\Om)}\leq C_*\|f\|_{L^{q}(\Om)}$ and $\|v\|_{C^{2+\iota}(\Om)}\leq C_*\|f\|_{C^{\iota}(\Om)}$.
\end{lemma}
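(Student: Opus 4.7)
The plan is to treat the three assertions separately, all via standard tools adapted to the self-adjoint elliptic operator $-\Delta + I$ with Neumann data. For the $L^p$ contraction $\|\phi\|_{L^p(\Om)} \leq \|f\|_{L^p(\Om)}$, I would first establish the endpoint $p = \infty$: the function $\phi - \|f\|_{L^\infty(\Om)}$ satisfies $-\Delta(\phi - \|f\|_{L^\infty(\Om)}) + (\phi - \|f\|_{L^\infty(\Om)}) \leq 0$ in $\Om$ with homogeneous Neumann data, so the maximum principle (no interior maximum; Hopf's lemma at the boundary) forces $\phi \leq \|f\|_{L^\infty(\Om)}$, and replacing $\phi$ by $-\phi$ gives the reverse bound. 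Next, for $p \in [2, \infty)$, I would test the equation against $|\phi|^{p-2}\phi$, note that the boundary term in the integration by parts vanishes because of the Neumann condition and that the gradient term is non-negative, and apply H\"older's inequality to conclude $\|\phi\|_{L^p(\Om)}^{p} \leq \|f\|_{L^p(\Om)} \|\phi\|_{L^p(\Om)}^{p-1}$. For $p \in [1, 2)$, I would pass via duality: the resolvent $f \mapsto \phi$ is self-adjoint on $L^2(\Om)$ and, by the previous step, bounded on $L^\infty(\Om)$, so by duality it is bounded on $L^1(\Om)$, after which Riesz--Thorin interpolation fills the remaining range.

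For the second claim, $\|\phi\|_{W^{2,q}(\Om)} \leq C_* \|f\|_{L^q(\Om)}$, I would appeal to the classical Calder\'on--Zygmund / Agmon--Douglis--Nirenberg theory for uniformly elliptic second-order operators on smooth bounded domains with oblique derivative boundary conditions: the normal derivative satisfies the Lopatinskii complementing condition against $-\Delta + I$, hence $\phi \in W^{2,q}(\Om)$ with an a priori bound of the form $\|\phi\|_{W^{2,q}(\Om)} \leq C(\|f\|_{L^q(\Om)} + \|\phi\|_{L^q(\Om)})$, and combining with the $L^q$ contraction already proved absorbs the zero-order term on the right into $\|f\|_{L^q(\Om)}$. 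The third claim, $\|\phi\|_{C^{2+\iota}(\Om)} \leq C_* \|f\|_{C^{\iota}(\Om)}$, follows identically from the classical Schauder estimates for linear elliptic problems with Neumann data, applied to $f \in C^{\iota}(\bar\Om)$.

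The main obstacle in a fully self-contained argument would be the endpoint $p = 1$, since $\mathrm{sgn}(\phi)$ is not admissible as a test function; one must either smoothly regularize it and exploit Kato's inequality to control the Laplacian term, or rely on the duality route indicated above. A secondary technical point is the boundary regularity of $\partial \Om$ needed to apply both the $W^{2,q}$ and Schauder estimates, which is ensured here by the smoothness assumption on $\partial\Om$. In practice, since the lemma is quoted directly from \cite{4,gilbarg2015elliptic}, it suffices to reference those works rather than to re-derive the inequalities from scratch.
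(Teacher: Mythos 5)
Your proposal is correct, but it is worth noting that the paper itself offers no proof of this lemma at all: it simply records the statement as a known maximal-regularity result and points to \cite[Lemma 4]{Chemotaxis} and to \cite{4,gilbarg2015elliptic}. Your sketch is therefore a genuine reconstruction rather than a parallel to anything in the text, and it is the standard one: the maximum principle (with Hopf's lemma to exclude a boundary maximum under the Neumann condition) for $p=\infty$, testing with $|\phi|^{p-2}\phi$ for $p\in[2,\infty)$, self-adjointness plus duality and interpolation for $p\in[1,2)$ --- which correctly sidesteps the non-admissibility of $|\phi|^{p-2}\phi$ near the zero set of $\phi$ when $p<2$ --- and Agmon--Douglis--Nirenberg and Schauder theory for the $W^{2,q}$ and $C^{2+\iota}$ bounds. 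Two small remarks. First, the Calder\'on--Zygmund/ADN step genuinely requires $1<q<\infty$, so the lemma's literal claim ``for any $q\geq1$'' is too generous at the endpoint $q=1$; this is a defect of the statement as quoted (inherited from the source), not of your argument, and is harmless since the paper only ever invokes the estimate with $q>n\geq1$. Second, your closing observation is exactly what the paper does in practice: it relies on the citation and never re-derives these inequalities, so your fuller argument is strictly more than the paper supplies.
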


\begin{lemma}\label{xs3}
Let $p>1$ and $\eta_1, \eta_2>0$ be three constants, then for any classical solution $(v, w)$ of the problem
\begin{eqnarray*}
\left\{
\begin{array}{ll}
0=d_1\Delta v-\beta v+\alpha w,\quad &x\in\Omega,\\
0=d_2\Delta w-\delta w+\gamma u,\quad &x\in\Omega,\\
\ds\frac{\partial v}{\partial \nu}=\frac{\partial w}{\partial \nu}=0,\quad &x\in\partial\Omega,
\end{array}
\right.
\end{eqnarray*}
there are two positive constants $C_1=C(\eta_1, p, d_1, \alpha, \beta), C_2=C(\eta_2, p, d_2, \delta, \gamma)>0$ such that
\begin{equation*}\begin{split}
\int_\Om v^{p+1}\leq\eta_1\int_\Omega w^{p+1}+C_1\left(\int_\Om w\right)^{p+1},\\
\int_\Om w^{p+1}\leq\eta_2\int_\Omega u^{p+1}+C_2\left(\int_\Om u\right)^{p+1}.
\end{split}\end{equation*}
\end{lemma}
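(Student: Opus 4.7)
The two estimates are structurally symmetric---the second follows from the first by the substitutions $(v,w,d_1,\alpha,\beta)\mapsto(w,u,d_2,\gamma,\delta)$---so my plan is to focus only on the first. The key observation is that the direct elliptic bound $\|v\|_{L^{p+1}}\le C\|w\|_{L^{p+1}}$ (with $C$ depending only on $d_1,\alpha,\beta$) provides no smallness, while the integrated identity $\beta\int_\Omega v=\alpha\int_\Omega w$ gives a sharp $L^1$-estimate that is independent of $\|w\|_{L^{p+1}}$. Interpolating between these two bounds via a compactness argument is exactly what allows one to shift mass from a small coefficient in front of $\int_\Omega w^{p+1}$ onto a large multiple of $(\int_\Omega w)^{p+1}$.

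Concretely, I would proceed in three steps. First, apply Lemma \ref{2ly} (after absorbing the positive constants $d_1,\beta,\alpha$ by a trivial rescaling) to the equation $-d_1\Delta v+\beta v=\alpha w$ with Neumann boundary data to obtain some $C_*=C_*(d_1,\alpha,\beta,p,\Omega)$ such that
\begin{equation*}
\|v\|_{W^{1,p+1}(\Omega)}\le\|v\|_{W^{2,p+1}(\Omega)}\le C_*\|w\|_{L^{p+1}(\Omega)}.
\end{equation*}
Second, testing the same equation against the constant function $1$ and using $\partial_\nu v\equiv 0$ on $\partial\Omega$ yields $\beta\int_\Omega v=\alpha\int_\Omega w$; since $w\ge 0$ the maximum principle forces $v\ge 0$, so in fact $\|v\|_{L^1(\Omega)}=(\alpha/\beta)\int_\Omega w$.

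Third, invoke Ehrling's inequality, which follows from the compact embedding $W^{1,p+1}(\Omega)\hookrightarrow\hookrightarrow L^{p+1}(\Omega)$ (Rellich--Kondrachov, valid on any bounded smooth domain as in this paper): for every $\tilde\eta>0$ there exists $C(\tilde\eta)>0$ with
\begin{equation*}
\|\phi\|_{L^{p+1}(\Omega)}\le\tilde\eta\|\phi\|_{W^{1,p+1}(\Omega)}+C(\tilde\eta)\|\phi\|_{L^1(\Omega)}\quad\text{for all }\phi\in W^{1,p+1}(\Omega).
\end{equation*}
Applying this with $\phi=v$ and inserting the two previous bounds gives
\begin{equation*}
\|v\|_{L^{p+1}(\Omega)}\le\tilde\eta C_*\|w\|_{L^{p+1}(\Omega)}+\frac{\alpha C(\tilde\eta)}{\beta}\int_\Omega w.
\end{equation*}
Raising to the $(p+1)$-th power via $(a+b)^{p+1}\le 2^p(a^{p+1}+b^{p+1})$ and choosing $\tilde\eta=\tilde\eta(\eta_1)$ so that $2^p(\tilde\eta C_*)^{p+1}=\eta_1$ completes the proof, with $C_1:=2^p C(\tilde\eta)^{p+1}(\alpha/\beta)^{p+1}$ depending only on $\eta_1,p,d_1,\alpha,\beta$ (and implicitly on $\Omega$).

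I do not anticipate a genuine obstacle: both Lemma \ref{2ly} and Ehrling's inequality are standard tools, and it is precisely the compactness of $W^{1,p+1}\hookrightarrow L^{p+1}$ that allows the coefficient in front of $\int_\Omega w^{p+1}$ to be made arbitrarily small. The only point requiring mild care is the bookkeeping of the $(p+1)$-th power dependence of $C_1$ on $\tilde\eta$, which is mechanical.
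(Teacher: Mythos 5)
Your proof is correct, but it follows a genuinely different route from the one the paper relies on. The paper itself omits the argument and defers to \cite[Lemma 2.2]{Winkler2014How} and \cite[Lemma 2.2]{Chemotaxis}; the proof there is variational: one tests $0=d_1\Delta v-\beta v+\alpha w$ with $v^p$, integrates by parts to produce the dissipative term $\frac{4p d_1}{(p+1)^2}\int_\Om\bigl|\nabla v^{\frac{p+1}{2}}\bigr|^2$, bounds $\int_\Om wv^p$ by Young, and then applies a Gagliardo--Nirenberg interpolation to $f=v^{\frac{p+1}{2}}$ between $\|\nabla f\|_{L^2(\Om)}$ and $\|f\|_{L^{2/(p+1)}(\Om)}=\bigl(\int_\Om v\bigr)^{\frac{p+1}{2}}$, finishing with $\beta\int_\Om v=\alpha\int_\Om w$. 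You instead combine the linear $W^{2,p+1}$ elliptic estimate of Lemma \ref{2ly} with the abstract Ehrling lemma for the chain $W^{1,p+1}(\Om)\hookrightarrow\hookrightarrow L^{p+1}(\Om)\hookrightarrow L^1(\Om)$, together with the same mass identity. Both arguments are sound and yield the same structure of constants (with the unavoidable implicit dependence on $\Om$ that the lemma's statement suppresses); the testing approach is more self-contained, needing only integration by parts and Gagliardo--Nirenberg and giving in principle trackable constants, whereas your argument is softer (the Ehrling constant is obtained by compactness and is non-constructive) but shorter and reuses machinery the paper already has in Lemma \ref{2ly}. The only points that deserve a word of care in your write-up are (i) the nonnegativity of $v$ and $w$, which you correctly derive from the maximum principle and which is needed both for $\|v\|_{L^1(\Om)}=\frac{\alpha}{\beta}\int_\Om w$ and for $v^{p+1}$ to be meaningful for non-integer $p$, and (ii) the rescaling used to reduce $-d_1\Delta v+\beta v=\alpha w$ to the normalized form of Lemma \ref{2ly}, which changes the domain and hence feeds $d_1,\beta$ into $C_*$ through $\Om$ as well --- harmless, but worth stating.
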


\begin{proof}
The conclusion can be proved by the same arguments in \cite[Lemma 2.2]{Winkler2014How} or \cite[Lemma 2.2]{Chemotaxis}, so we omit the details here.
\end{proof}

In order to manage the boundary integration, the following three lemmata are necessary for us, they can be found in \cite{ishida2014boundedness}.

\begin{lemma}\label{bjt}
Let $\Om$ be a bounded domain in $\mathbb{R}^n$ with smooth boundary. If $\psi\in C^2(\bar\Om)$ satisfies $\frac{\partial\psi}{\partial\nu}=0$, then $\frac{\partial|\nabla\psi|^2}{\partial\nu}\leq C_\Om|\nabla\psi|^2$, where $C_\Om>0$ is a constant depending only on the curvatures of $\partial\Om$.
\end{lemma}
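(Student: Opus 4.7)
The plan is to expand $\frac{\partial |\nabla\psi|^2}{\partial\nu}$ pointwise on $\partial\Om$ and convert the resulting expression into a quantity controlled by the second fundamental form of $\partial\Om$ via tangential differentiation of the Neumann condition. First, I would fix a smooth extension of the outer unit normal $\nu$ to an open neighborhood of $\partial\Om$ (for example, $\nu = \nabla d$, where $d$ is a smooth local extension of the signed distance function to $\partial\Om$). A direct computation then gives, on $\partial\Om$,
\begin{equation*}
\frac{\partial |\nabla\psi|^2}{\partial\nu} \;=\; 2\sum_{i,j=1}^{n} \psi_i\, \psi_{ij}\, \nu_j .
\end{equation*}

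Next, I would exploit the Neumann boundary condition. The identity $\sum_i \psi_i \nu_i \equiv 0$ on $\partial\Om$ says that $\nabla\psi$ is tangent to $\partial\Om$ along the boundary. Since $\nabla\psi$ is a tangential direction, differentiating the identity along $\nabla\psi$ produces another identity that holds on $\partial\Om$, namely
\begin{equation*}
\sum_{i,j} \psi_j\, \psi_{ij}\, \nu_i \;+\; \sum_{i,j} \psi_i\, \psi_j\, \partial_j \nu_i \;=\; 0 \quad \text{on } \partial\Om .
\end{equation*}
Combining the two identities (after a harmless renaming of indices, using $\psi_{ij}=\psi_{ji}$), I obtain
\begin{equation*}
\frac{\partial |\nabla\psi|^2}{\partial\nu} \;=\; -\,2\sum_{i,j} \psi_i\, \psi_j\, \partial_j \nu_i \quad \text{on } \partial\Om .
\end{equation*}

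Finally, the bilinear form $(X,Y)\mapsto \sum_{i,j} X_i\, Y_j\, \partial_j \nu_i$, when restricted to tangent vectors of $\partial\Om$, coincides up to sign with the second fundamental form of $\partial\Om$; in particular, its pointwise operator norm is controlled by the principal curvatures of $\partial\Om$. Smoothness and compactness of $\partial\Om$ imply these curvatures are uniformly bounded by some constant $C_\Om$ depending only on $\partial\Om$. Hence, applying this bound to $X=Y=\nabla\psi$, which is tangential on $\partial\Om$, yields
\begin{equation*}
\left|\frac{\partial |\nabla\psi|^2}{\partial\nu}\right| \;\leq\; C_\Om\, |\nabla\psi|^2 \quad \text{on } \partial\Om ,
\end{equation*}
from which the one-sided estimate in the lemma follows at once.

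The only delicate point — and the sole place where care is needed — is that the individual entries $\partial_j \nu_i$ depend on the chosen smooth extension of $\nu$, yet the tangential bilinear form they induce is intrinsic (it is the shape operator of $\partial\Om$), so the final bound in terms of curvatures of $\partial\Om$ is independent of all extension choices. Beyond this invariance check, no substantive obstacle is anticipated: the argument is local, purely geometric, and uses nothing more than one tangential differentiation of the Neumann condition.
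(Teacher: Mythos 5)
Your argument is correct: the pointwise expansion, the tangential differentiation of $\sum_i\psi_i\nu_i=0$ along the (tangential) vector $\nabla\psi$, and the identification of the resulting bilinear form with the second fundamental form constitute the standard proof of this inequality. The paper itself offers no proof and simply cites \cite{ishida2014boundedness}, so there is nothing to compare against; your derivation is precisely the classical argument underlying that cited result, and the invariance remark about the choice of extension of $\nu$ is a correct and appropriate finishing touch.
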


\begin{lemma}\label{bjt2}
Let $\Om$ be a bounded domain in $\mathbb{R}^n$ with smooth boundary and $\tilde{r}>0$ be a constant. Then $W^{\tilde{r}, 2}(\partial\Om)\hookrightarrow L^2(\partial\Om)$ is a compact embedding and there exists a linear and bounded map from $W^{\tilde{r}+\frac{1}{2}, 2}(\Om)$ onto $W^{\tilde{r}, 2}(\partial\Om)$.
\end{lemma}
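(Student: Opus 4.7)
The plan is to reduce both assertions to classical half-space results via a finite smooth atlas on $\partial\Om$ together with a subordinate partition of unity. Since $\partial\Om$ is smooth and compact, I would fix finitely many charts $\{(U_i,\varphi_i)\}_{i=1}^N$ such that each $\varphi_i$ is a $C^\infty$-diffeomorphism locally straightening $\partial\Om$ to a piece of $\mathbb{R}^{n-1}\times\{0\}$, and choose a subordinate partition of unity $\{\chi_i\}$ on $\partial\Om$ together with a compatible one on a neighborhood of $\partial\Om$ in $\Om$.

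For the compact embedding $W^{\tilde{r},2}(\partial\Om)\hookrightarrow L^2(\partial\Om)$, I would take a bounded sequence $(f_k)\subset W^{\tilde{r},2}(\partial\Om)$, decompose $f_k=\sum_i \chi_i f_k$, and pull back each localized piece via $\varphi_i$ to obtain a bounded sequence in $W^{\tilde{r},2}(\mathbb{R}^{n-1})$ with support in a common compact set. Since $\tilde{r}>0$, the classical Rellich-Kondrachov theorem on bounded subsets of $\mathbb{R}^{n-1}$ produces a subsequence converging strongly in $L^2$. Pushing forward via $\varphi_i^{-1}$ and summing over $i$ yields an $L^2(\partial\Om)$-convergent subsequence of $(f_k)$, since change of variables preserves the relevant norms up to multiplicative constants depending on the Jacobians of the $\varphi_i$.

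For the bounded surjective trace map $T\colon W^{\tilde{r}+1/2,2}(\Om)\to W^{\tilde{r},2}(\partial\Om)$, I would first establish the half-space trace theorem: for $s=\tilde{r}+\tfrac{1}{2}>\tfrac{1}{2}$ and $u\in C_c^\infty(\overline{\mathbb{R}^n_+})$, a Plancherel computation in the tangential Fourier variable $\xi'$ gives
\begin{equation*}
\int_{\mathbb{R}^{n-1}}(1+|\xi'|^2)^{s-1/2}\bigl|\widehat{u(\cdot,0)}(\xi')\bigr|^2\,d\xi'\le C\|u\|_{H^s(\mathbb{R}^n_+)}^2,
\end{equation*}
so the trace extends by density to a bounded linear map $\gamma_0\colon H^s(\mathbb{R}^n_+)\to H^{s-1/2}(\mathbb{R}^{n-1})$. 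Surjectivity follows by exhibiting an explicit right inverse, for instance a cutoff-Poisson extension of the form $(Eg)(x',x_n)=\int e^{2\pi i x'\cdot\xi'}\phi(x_n\langle\xi'\rangle)\widehat{g}(\xi')\,d\xi'$ with a smooth cutoff $\phi$ of compact support and $\phi(0)=1$. Gluing these local trace and extension operators via $\chi_i$ and the charts produces the global operator $T$ and simultaneously demonstrates its surjectivity onto $W^{\tilde{r},2}(\partial\Om)$.

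The main technical obstacle is verifying that $W^{\tilde{r},2}(\partial\Om)$ is well-defined independently of the chart and partition-of-unity choices, because for non-integer $\tilde{r}$ the norm is nonlocal (defined either through the Gagliardo seminorm or, equivalently, through the spectral resolution of the Laplace-Beltrami operator on $\partial\Om$). The required invariance amounts to showing that composition with $C^\infty$-diffeomorphisms having bounded derivatives is a bounded operator on $W^{\tilde{r},2}$; this is standard but genuinely uses a Littlewood-Paley decomposition or an interpolation argument between the integer cases. Once this invariance is secured, both the compactness statement and the trace/extension statement reduce mechanically to the half-space Fourier calculations above, completing the proof.
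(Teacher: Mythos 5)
Your proposal is a correct outline of the standard proof, but it is worth noting that the paper does not prove this lemma at all: Lemma \ref{bjt2} is one of three auxiliary facts that the author simply quotes from \cite{ishida2014boundedness} (which in turn relies on classical trace and embedding theory), so there is no in-paper argument to compare against step by step. What you have written is essentially the textbook route: a finite atlas and partition of unity reducing everything to the half-space, Rellich--Kondrachov for the compact embedding $H^{\tilde r}\hookrightarrow L^2$ on a bounded set of $\mathbb{R}^{n-1}$, the tangential-Plancherel estimate for the trace $\gamma_0\colon H^{s}(\mathbb{R}^n_+)\to H^{s-1/2}(\mathbb{R}^{n-1})$ with $s=\tilde r+\tfrac12>\tfrac12$, and a cutoff--Poisson right inverse for surjectivity. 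This is sound, and you correctly identify the one genuinely delicate point, namely the diffeomorphism-invariance of the fractional norm needed to make $W^{\tilde r,2}(\partial\Om)$ chart-independent; that is indeed where an interpolation or Littlewood--Paley argument must be invoked rather than waved at. The only minor item I would ask you to make explicit is that the glued extension $\sum_i\chi_i^{\Om}E_i$ really restricts to $g$ on $\partial\Om$, which requires the interior cutoffs to equal the boundary partition of unity on $\partial\Om$; you gesture at this with the word ``compatible'' but it deserves one line. In short: your argument proves the lemma from first principles, whereas the paper (reasonably, for a result of this classical nature) outsources it to the literature; your version buys self-containedness at the cost of length, and nothing in it would fail.
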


\begin{lemma}\label{bjt3}
Let $n\in\mathbb{N}$ and $s, s_1\geq1$. Assume that $p>0$ and $a\in(0,1)$ satisfy
\begin{equation*}\begin{split}
\frac{1}{2}-\frac{p}{n}=(1-a)\frac{s_1}{s}+a\left(\frac{1}{2}-\frac{1}{n}\right)\ \mbox{and}\ \ p\leq a.
\end{split}\end{equation*}
Then there exist $C, C'>0$ such that for all $f\in W^{1, 2}(\Om)\cap L^{\frac{s}{s_1}}(\Om)$,
\begin{equation*}
\|f\|_{W^{p, 2}(\Om)}\leq C\|\nabla f\|^a_{L^2(\Om)}\|f\|^{1-a}_{L^{\frac{s}{s_1}}(\Om)}+C'\|f\|_{L^{\frac{s}{s_1}}(\Om)}.
\end{equation*}
\end{lemma}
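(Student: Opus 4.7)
The plan is to reduce this inequality to its classical fractional Gagliardo--Nirenberg counterpart on the whole space $\mathbb{R}^n$, and then transport the result to the bounded domain $\Om$ through a bounded linear extension operator. First I would verify the scaling condition: under the dilation $f_\lambda(x):=f(\lambda x)$, the quantities $\|f\|_{W^{p,2}}$, $\|\nabla f\|_{L^2}$, $\|f\|_{L^{s/s_1}}$ on $\mathbb{R}^n$ scale with exponents $p-\frac{n}{2}$, $1-\frac{n}{2}$, and $-\frac{ns_1}{s}$, respectively. Equating the scaling exponents of the two sides of the proposed inequality forces precisely the algebraic identity stated in the lemma. Thus the inequality is at least dimensionally consistent, and this will be the admissibility condition for the interpolation estimate that follows.

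Next, I would establish the sharp homogeneous estimate on the whole space:
\begin{equation*}
\|g\|_{W^{p,2}(\mathbb{R}^n)}\leq C\|\nabla g\|_{L^2(\mathbb{R}^n)}^a\|g\|_{L^{s/s_1}(\mathbb{R}^n)}^{1-a},
\end{equation*}
for sufficiently smooth $g$. The standard route is to use the Bessel-potential characterisation $\|g\|_{W^{p,2}}\sim\|(1-\Delta)^{p/2}g\|_{L^2}$ and then invoke either complex interpolation between the Sobolev endpoint $H^1=W^{1,2}$ and $L^{s/s_1}$, or, equivalently, a Littlewood--Paley argument summing high- and low-frequency pieces against $\|\nabla g\|_{L^2}$ and $\|g\|_{L^{s/s_1}}$. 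The assumption $p\leq a$ is precisely what places the target space $W^{p,2}$ on the interpolation segment between $L^{s/s_1}$ and $W^{1,2}$, so it guarantees the validity of the estimate and the finiteness of the interpolation constant.

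To pass from $\mathbb{R}^n$ to $\Om$, I would use the fact that $\Om$ has a smooth (hence Lipschitz) boundary, so there exists a bounded linear extension $E\colon W^{1,2}(\Om)\to W^{1,2}(\mathbb{R}^n)$ which is simultaneously bounded $L^{s/s_1}(\Om)\to L^{s/s_1}(\mathbb{R}^n)$ (for instance Stein's universal extension). Writing $g:=Ef$, applying Step~2 to $g$, and then restricting to $\Om$ using $\|f\|_{W^{p,2}(\Om)}\leq\|Ef\|_{W^{p,2}(\mathbb{R}^n)}$ yields
\begin{equation*}
\|f\|_{W^{p,2}(\Om)}\leq C\bigl(\|\nabla f\|_{L^2(\Om)}+\|f\|_{L^2(\Om)}\bigr)^a\|f\|_{L^{s/s_1}(\Om)}^{1-a}.
\end{equation*}
The inevitable lower-order term $\|f\|_{L^2(\Om)}$ comes from the fact that, on a bounded domain, the extension controls $\|\nabla Ef\|_{L^2(\mathbb{R}^n)}$ only in terms of the full $W^{1,2}(\Om)$ norm. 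Splitting $(\|\nabla f\|_{L^2}+\|f\|_{L^2})^a\leq \|\nabla f\|_{L^2}^a+\|f\|_{L^2}^a$ and then interpolating $\|f\|_{L^2(\Om)}$ between $\|\nabla f\|_{L^2(\Om)}$ and $\|f\|_{L^{s/s_1}(\Om)}$ via Hölder produces exactly the two terms on the right-hand side of the lemma, with $C'$ absorbing the residual contribution that cannot be made of product form.

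The main obstacle I expect is the endpoint regime where $p$ is close to $a$: the fractional Gagliardo--Nirenberg inequality is delicate near the boundary of the admissible range, and one must verify that the inequality $p\leq a$ (rather than a strict version) still yields a finite interpolation constant. In particular, if one works via Besov-space interpolation $[L^{s/s_1},W^{1,2}]_{a}=B^{a,2}_{\cdot}$, one has to check the embedding $B^{a,2}_{\cdot}\hookrightarrow W^{p,2}$ holds with constants uniform as $p\uparrow a$, which is where the scaling identity assumed in the hypothesis must be used in its full strength rather than merely as a necessary condition.
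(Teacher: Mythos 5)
The paper does not prove this lemma at all: it is imported verbatim from the reference \cite{ishida2014boundedness} (Ishida--Seki--Yokota), where it appears as a fractional Gagliardo--Nirenberg inequality on bounded smooth domains, proved by exactly the route you sketch --- extension to $\mathbb{R}^n$, the homogeneous fractional interpolation estimate there, and restriction back with an additive lower-order term. So your proposal is not a different approach; it is a reconstruction of the standard proof of the cited result, and its overall architecture (scaling check, whole-space estimate, Stein extension, splitting $(\|\nabla f\|_{L^2}+\|f\|_{L^2})^a$) is sound.

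Two points in your sketch deserve tightening. First, complex interpolation between $L^{s/s_1}(\mathbb{R}^n)$ and $H^1(\mathbb{R}^n)$ does not land in $W^{p,2}$ with a product-form bound when $s/s_1\neq 2$ (the interpolation spaces have mixed integrability), so the phrase ``or, equivalently'' is misleading: the argument that actually closes is the Littlewood--Paley low/high frequency splitting with an optimized cutoff, estimating the high frequencies by $N^{p-1}\|\nabla g\|_{L^2}$ and the low frequencies by Bernstein against $\|g\|_{L^{s/s_1}}$; the scaling identity guarantees the two bounds balance at the optimal $N$, and convergence of the low-frequency sum uses $s/s_1\le 2$ (which holds in the paper's only application, Lemma \ref{bjt4}, where $s/s_1=2$ and $p=a=\tilde r+\tfrac12$). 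Second, your final step of ``interpolating $\|f\|_{L^2(\Om)}$ between $\|\nabla f\|_{L^2(\Om)}$ and $\|f\|_{L^{s/s_1}(\Om)}$ via H\"older'' is not a H\"older inequality unless $s/s_1\ge 2$ (in which case $\|f\|_{L^2(\Om)}\le C\|f\|_{L^{s/s_1}(\Om)}$ on the bounded domain and the term is absorbed into $C'$ directly); for $s/s_1<2$ you need the ordinary Gagliardo--Nirenberg inequality followed by Young's inequality to recover the two-term right-hand side. Neither issue is fatal, but both should be stated correctly if you intend the argument to be self-contained.
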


In order to prove Theorem \ref{jbczs}, we need some important properties of the solution to problem \eqref{1} in general bounded domain. Firstly, we give following conclusion on the boundary integration, which will serve Lemma \ref{144444} to get the boundedness of $u$ in $L^\infty([0, T]; W^{1, q}(\Om))$.

\begin{lemma}\label{bjt4}
Assume $\Om$ be a bounded domain with smooth boundary and $q>n>1$. For the solution of problem \eqref{1} there exists positive constant $C$ which is independent of $\epsilon$ and $t$ such that
\begin{equation}\label{ftgj}\begin{split}
\frac{1}{2}\int_{\partial\Om}|\nabla u|^{q-2}\frac{\partial|\nabla u|^2}{\partial\nu}\leq\frac{4(q-2)}{q^2}\int_\Om\left(\nabla(|\nabla u|^{\frac{q}{2}})\right)^2+C\int_\Om|\nabla u|^q.
\end{split}\end{equation}
\end{lemma}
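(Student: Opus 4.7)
The strategy is to convert the boundary integral into a controlled interior quantity via a trace-type estimate. First, since the classical solution $u$ of \eqref{1} satisfies $\frac{\partial u}{\partial \nu}=0$ on $\partial\Om$, Lemma \ref{bjt} applied with $\psi=u$ yields $\frac{\partial |\nabla u|^2}{\partial\nu}\leq C_\Om |\nabla u|^2$ on $\partial\Om$. Multiplying by $\frac{1}{2}|\nabla u|^{q-2}$ and integrating reduces the target to
\begin{equation*}
\frac{1}{2}\int_{\partial\Om}|\nabla u|^{q-2}\frac{\partial |\nabla u|^2}{\partial\nu}\leq \frac{C_\Om}{2}\int_{\partial\Om}|\nabla u|^q,
\end{equation*}
so the remaining task is to dominate $\int_{\partial\Om}|\nabla u|^q$ by interior norms.

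Setting $f:=|\nabla u|^{q/2}$, one has $\int_{\partial\Om}|\nabla u|^q=\|f\|_{L^2(\partial\Om)}^2$, $\|f\|_{L^2(\Om)}^2=\int_\Om|\nabla u|^q$, and $\|\nabla f\|_{L^2(\Om)}^2=\int_\Om(\nabla(|\nabla u|^{q/2}))^2$. By Lemma \ref{bjt2}, for any $p\in(\tfrac{1}{2},1)$ the composed trace map $W^{p,2}(\Om)\to W^{p-1/2,2}(\partial\Om)\hookrightarrow L^2(\partial\Om)$ is bounded, so $\|f\|_{L^2(\partial\Om)}\leq C\|f\|_{W^{p,2}(\Om)}$. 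Next, I would invoke Lemma \ref{bjt3} with the choice $s/s_1=2$ and $a=p$; under this choice the interpolation identity $\frac{1}{2}-\frac{p}{n}=(1-a)\cdot\frac{1}{2}+a(\frac{1}{2}-\frac{1}{n})$ reduces to $p=a$, and the admissibility condition $p\leq a$ holds with equality. This yields
\begin{equation*}
\|f\|_{W^{p,2}(\Om)}\leq C\|\nabla f\|_{L^2(\Om)}^{p}\|f\|_{L^2(\Om)}^{1-p}+C'\|f\|_{L^2(\Om)}.
\end{equation*}

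Squaring and applying Young's inequality in the form $A^{2p}B^{2(1-p)}\leq \eta A^2+C(\eta)B^2$, valid for $p\in(0,1)$, the chain of estimates collapses to
\begin{equation*}
\frac{1}{2}\int_{\partial\Om}|\nabla u|^{q-2}\frac{\partial|\nabla u|^2}{\partial\nu}\leq\eta'\int_\Om(\nabla(|\nabla u|^{q/2}))^2+C(\eta')\int_\Om|\nabla u|^q,
\end{equation*}
where $\eta'>0$ can be made arbitrarily small at the price of enlarging $C(\eta')$. Choosing $\eta'=\frac{4(q-2)}{q^2}$ then produces the stated inequality; every constant in the chain depends only on $q$, $n$ and $\Om$ together with fixed Sobolev/trace/interpolation constants, so $C$ is independent of $\epsilon$ and $t$. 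The one genuine obstacle is the parameter matching: arranging simultaneously that $L^{s/s_1}(\Om)=L^2(\Om)$ (forcing $s/s_1=2$) and that $p=a\in(\tfrac12,1)$ lies in the admissible range of the trace theorem, while keeping the identity of Lemma \ref{bjt3} consistent. Once this algebraic calibration is checked, the remainder is a routine trace-plus-interpolation-plus-Young argument.
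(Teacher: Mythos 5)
Your argument is correct and follows essentially the same route as the paper's proof: Lemma \ref{bjt} to bound the normal derivative, the trace embedding of Lemma \ref{bjt2} with exponent $\tilde r+\tfrac12$ (your $p$), the interpolation of Lemma \ref{bjt3} with $s/s_1=2$ and $a=p$, and finally Young's inequality to absorb the gradient term with the prescribed coefficient. The parameter calibration you flag as the one delicate point is exactly the choice $\tilde r\in(0,\tfrac12)$, $a=\tilde r+\tfrac12$ made in the paper, so there is nothing further to check.
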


\begin{proof}
Applying Lemmata \ref{bjt}, \ref{bjt2} and \ref{bjt3}, for some positive constants $C$ and $C'$ we can see
\begin{equation*}\begin{split}
\int_{\partial\Om}|\nabla u|^{q-2}\frac{\partial|\nabla u|^2}{\partial\nu}&\leq C\int_{\partial\Om}|\nabla u|^{q}\\
&=C\left\||\nabla u|^{\frac{q}{2}}\right\|^2_{L^2(\partial\Om)}\\
&\leq C\left\||\nabla u|^{\frac{q}{2}}\right\|^2_{W^{\tilde{r}+\frac{1}{2}, 2}(\Om)}\\
&\leq C\left\|\nabla|\nabla u|^{\frac{q}{2}}\right\|^{2a}_{L^2(\Om)}\left\||\nabla u|^{\frac{q}{2}}\right\|^{2(1-a)}_{L^2(\Om)}+C'\left\||\nabla u|^{\frac{q}{2}}\right\|^{2}_{L^2(\Om)},
\end{split}\end{equation*}
where $\tilde{r}\in(0, \frac{1}{2})$ and $a=\tilde{r}+\frac{1}{2}\in(0, 1)$. Then by Young's inequality we know there exists constant $C(q)>0$ such that
\begin{equation*}\begin{split}
C\left\|\nabla|\nabla u|^{\frac{q}{2}}\right\|^{2a}_{L^2(\Om)}\left\||\nabla u|^{\frac{q}{2}}\right\|^{2(1-a)}_{L^2(\Om)}\leq\frac{8(q-2)}{q^2}\left\|\nabla|\nabla u|^{\frac{q}{2}}\right\|^{2}_{L^2(\Om)}+C(q)\left\||\nabla u|^{\frac{q}{2}}\right\|^{2}_{L^2(\Om)},
\end{split}\end{equation*}
this leads to \eqref{ftgj} immediately.
\end{proof}

The following logarithmic Sobolev inequality is useful for us, which comes from \cite{Ogawa2003} and has been used as a tool in \cite{kang2016blowup,tian2018hyperbolic} for related chemotaxis models.

\begin{lemma}
Let $n<q<\infty$ and $f\in W^{1, q}(\Om)$, there is constant $C>0$ depending on $n, q$ and $\Om$ such that
\begin{equation}\label{fyd}
\|f\|_{L^\infty(\Om)}\leq C\left[1+\|f\|_{BMO(\Om)}\left(1+\log^+\|f\|_{W^{1,q}(\Om)}\right)\right],
\end{equation}
where $\log^+g=\log g$ for $g>1$ and $\log^+g=0$ for otherwise.
\end{lemma}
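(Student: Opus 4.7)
The plan is to follow the classical Brezis--Gallouet--Wainger style argument, controlling $f(x_{0})$ at a given point by splitting into three ranges of scales and playing BMO oscillation control against Morrey's embedding. Fix $x_{0}\in\bar{\Omega}$, a large radius $R$ comparable to the diameter of $\Omega$, and a small radius $r\in(0,R)$ to be chosen. For each $\rho$ write $f_{\rho}:=\frac{1}{|B(x_{0},\rho)\cap\Omega|}\int_{B(x_{0},\rho)\cap\Omega}f$, and insert these two averages to obtain
\begin{equation*}
f(x_{0})=\bigl(f(x_{0})-f_{r}\bigr)+\bigl(f_{r}-f_{R}\bigr)+f_{R},
\end{equation*}
which is legitimate for a.e.~$x_{0}$ (hence for every $x_{0}$ once the $L^{\infty}$-bound has been obtained).

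First I would bound the small-scale term by Morrey's embedding: since $q>n$, there is a constant depending on $n,q,\Omega$ with
\begin{equation*}
|f(x_{0})-f_{r}|\leq Cr^{1-n/q}\|\nabla f\|_{L^{q}(\Omega)}.
\end{equation*}
For the middle term I would telescope along a geometric sequence of radii $r=\rho_{0}<\rho_{1}=2\rho_{0}<\cdots<\rho_{K}=R$, using that the averages of $f$ on two comparable balls differ by at most $C\|f\|_{BMO(\Omega)}$; summing $K\lesssim\log(R/r)$ increments yields
\begin{equation*}
|f_{r}-f_{R}|\leq C\|f\|_{BMO(\Omega)}\log\bigl(1+R/r\bigr).
\end{equation*}
The large-scale mean $f_{R}$ reduces, up to an absolute constant depending only on $\Omega$, to the mean of $f$ over $\Omega$, which is absorbed into the additive constant ``$1$'' on the right-hand side of~\eqref{fyd}; if the adopted convention for $\|\cdot\|_{BMO(\Omega)}$ does not already include a zeroth-order piece, one first passes to $f-f_{\Omega}$ and bounds the scalar $|f_{\Omega}|$ separately via an appropriate $L^{p}$-norm of $f$.

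The crucial balancing step is the choice of the cutoff scale: taking $r:=\min\{R,\,\|\nabla f\|_{L^{q}(\Omega)}^{-q/(q-n)}\}$ makes the Morrey contribution of unit order, while simultaneously $\log(1+R/r)\leq C\bigl(1+\log^{+}\|f\|_{W^{1,q}(\Omega)}\bigr)$. Combining the three estimates and taking the supremum in $x_{0}$ then produces \eqref{fyd}.

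The main obstacle is handling the boundary. Morrey's embedding and the BMO telescoping are cleanest on $\mathbb{R}^{n}$, but here a ball $B(x_{0},r)$ centred near $\partial\Omega$ intersects $\Omega$ only in a John-domain-like region, and one must verify that the two inequalities survive when averages are taken over $B(x_{0},\rho)\cap\Omega$ instead of full balls. Because $\partial\Omega$ is smooth, this is resolved either by invoking a bounded extension operator $E:W^{1,q}(\Omega)\to W^{1,q}(\mathbb{R}^{n})$ whose norm on BMO is also controlled by the geometry of $\Omega$, or intrinsically via a Whitney decomposition so that all averages are performed on subsets of $\Omega$. Either route introduces only constants depending on $n,q,\Omega$ and leaves the log-BMO structure of \eqref{fyd} intact.
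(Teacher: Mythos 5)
Your argument is essentially correct, but it is a genuinely different route from the one the paper takes: the paper does not prove the lemma at all, it simply cites \cite[Lemma 2.8]{Ogawa2003} and asserts that the proof there (which proceeds by extending $f$ and running a frequency-space/potential-theoretic version of the Brezis--Gallouet--Wainger argument, splitting into low, intermediate and high frequency pieces) carries over to general $n$ and $f\in W^{1,q}(\Om)$. Your physical-space version --- Morrey at scale $r$, dyadic telescoping of ball averages between $r$ and $R$ costing $C\|f\|_{BMO}\log(R/r)$ per the John--Nirenberg philosophy, and the optimization $r\sim\|\nabla f\|_{L^q}^{-q/(q-n)}$ --- is the standard elementary counterpart and gives exactly the right structure $C[1+\|f\|_{BMO}(1+\log^+\|f\|_{W^{1,q}})]$; the exponent bookkeeping in your choice of $r$ is correct, and your two proposed fixes for the boundary (extension operator versus intrinsic chaining over $B(x_0,\rho)\cap\Om$, using the measure-density property of a smooth domain) are both legitimate. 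The advantage of your route is that it is self-contained and transparent about where each of the three terms in \eqref{fyd} comes from; the advantage of the citation route is that \cite{Ogawa2003} already handles the domain issues.

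One caveat deserves emphasis. As stated, \eqref{fyd} is false if $\|\cdot\|_{BMO(\Om)}$ denotes only the oscillation seminorm: for $f\equiv M$ with $M$ large the left side is $M$ while the right side is the fixed constant $C$. So the lemma implicitly requires the convention in which $\|f\|_{BMO(\Om)}$ also controls the mean (as in \cite{Ogawa2003}), and your fallback of "bounding $|f_\Om|$ separately via an $L^p$-norm of $f$" would not rescue the stated form of \eqref{fyd}, since an additive $\|f\|_{L^p}$ term cannot be absorbed into the constant $1$. You should therefore fix the convention at the outset and bound the large-scale average $f_R$ directly by $C\|f\|_{BMO(\Om)}$, which then fits into the right-hand side of \eqref{fyd} with no extra term; this is also the convention under which the elliptic estimate $\|D^2v\|_{BMO(\Om)}\leq C\|u\|_{L^\infty(\Om)}$ used later in Lemma \ref{144444} is applied.
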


\begin{proof}
When $n=3$ and $f\in W^{s, q}(\Om)$ with $s>\frac{3}{q}$, \eqref{fyd} has been proved in \cite[Lemma 2.8]{Ogawa2003}. In fact, copy the proof of \cite[Lemma 2.8]{Ogawa2003} with very small adjustments we also get \eqref{fyd} for any $n\geq1$, $n<q<\infty$ and $f\in W^{1, q}(\Om)$, so we omit the details here.
\end{proof}

The next lemma guarantees that the classical solution of problem \eqref{1} is bounded locally in $L^\infty([0, T]; W^{1, q}(\Om))$ (cf. \cite[Lemma 6]{kang2016blowup}).

\begin{lemma}\label{144444}
For any classical solution $(u, v, w)$ of problem \eqref{1} with the initial data satisfy \eqref{cszjs} and $q>n\geq1$, If
\begin{itemize}
  \item [(i).] $\Om$ is a bounded domain, $\epsilon\in(0, \epsilon_0)$ for all $\epsilon_0>0$; or
  \item [(ii).] $\Om$ is a bounded  convex domain, $\epsilon>0$,
\end{itemize}
then there exists positive number $C$ such that
\begin{equation}\label{2.5}
\|u\|_{W^{1, q}(\Om)}^q\leq\frac{1}{\sqrt{\left|\left(\|u_{0}\|_{W^{1, q}(\Om)}^{-2q}+1\right)e^{-2Ct}-1\right|}},
\end{equation}
where the constant $C$ depends on $\epsilon_0, r, d_1, d_2, \alpha, \beta, \gamma, \delta, \Om$ for the item (i), while for the item (ii), it depends on $r, d_1, d_2, \alpha, \beta, \gamma, \delta, \Om$.
\end{lemma}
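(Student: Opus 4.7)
My plan is to derive a closed Bernoulli-type differential inequality of the form $y'(t)\leq C\,y(t)(1+y(t)^2)$ for $y(t):=\|u(\cdot,t)\|_{W^{1,q}(\Om)}^q$ and then integrate via ODE comparison. For the reference equation $f'=Cf(1+f^2)$ with $f(0)=y(0)$, separating variables gives $\log\frac{f}{\sqrt{1+f^2}}=Ct+\log\frac{y_0}{\sqrt{1+y_0^2}}$, which rearranges to $f^2=[(\|u_0\|_{W^{1,q}(\Om)}^{-2q}+1)e^{-2Ct}-1]^{-1}$, matching \eqref{2.5} exactly. Concretely, I would test the first equation of \eqref{1} by $u^{q-1}$ to get an $L^q$-identity, and in parallel compute $\frac{1}{q}\frac{d}{dt}\int_\Om|\nabla u|^q=\int_\Om|\nabla u|^{q-2}\nabla u\cdot\nabla u_t$ by substituting the PDE for $u_t$.

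For the diffusion term I would use the standard identity
\[
\int_\Om|\nabla u|^{q-2}\nabla u\cdot\nabla\Delta u = -\int_\Om|\nabla u|^{q-2}|D^2u|^2 - \tfrac{4(q-2)}{q^2}\int_\Om\bigl|\nabla|\nabla u|^{q/2}\bigr|^2 + \tfrac{1}{2}\int_{\partial\Om}|\nabla u|^{q-2}\tfrac{\partial|\nabla u|^2}{\partial\nu}.
\]
In case (ii), Lemma \ref{bjt} combined with convexity renders the boundary integral non-positive, so diffusion contributes only good terms. In case (i), Lemma \ref{bjt4} controls the boundary integral by $\tfrac{4(q-2)}{q^2}\int_\Om\bigl|\nabla|\nabla u|^{q/2}\bigr|^2+C\int_\Om|\nabla u|^q$; after multiplication by $\epsilon$ the bad gradient piece is absorbed exactly by the good one, leaving only $\epsilon C\int_\Om|\nabla u|^q\leq\epsilon_0 C\int_\Om|\nabla u|^q$ whose prefactor is $\epsilon$-independent---this is where the upper bound $\epsilon\in(0,\epsilon_0)$ enters.

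For the chemotaxis term, I would expand $\nabla\cdot(u\nabla v)=u\Delta v+\nabla u\cdot\nabla v$, integrate by parts once more in $\int_\Om|\nabla u|^{q-2}\nabla u\cdot D^2u\nabla v$ (using $\partial_\nu v=0$ so that the boundary contribution vanishes), and collect. The net contribution reduces to combinations of $\int_\Om|\nabla u|^q\Delta v$, $\int_\Om|\nabla u|^{q-2}\nabla u\cdot D^2v\nabla u$, and $\int_\Om u|\nabla u|^{q-2}\nabla u\cdot\nabla\Delta v$. Using Lemma \ref{2ly} along the elliptic chain $u\to w\to v$ together with $W^{2,q}(\Om)\hookrightarrow W^{1,\infty}(\Om)$ for $q>n$, I would bound $\|\nabla v\|_{L^\infty},\|\nabla w\|_{L^\infty}\leq C\|u\|_{L^q}$, and via iterated Schauder estimates $\|\Delta v\|_{L^\infty},\|D^2v\|_{L^\infty}\leq C\|u\|_{L^\infty}$ up to lower-order pieces. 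Young's inequality then packages the chemotaxis contribution into $C\|u\|_{L^\infty}\int_\Om|\nabla u|^q$ plus manageable residues.

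To close, I invoke the logarithmic Sobolev inequality \eqref{fyd}, bounding $\|u\|_{L^\infty}\leq C\bigl[1+\|u\|_{BMO}(1+\log^+\|u\|_{W^{1,q}(\Om)})\bigr]$, where $\|u\|_{BMO}$ is controlled by $\|u\|_{W^{1,q}(\Om)}$ via the standard embedding $W^{1,n}(\Om)\hookrightarrow BMO(\Om)$ since $q>n$. Combined with the gradient inequality and the logistic dissipation $-\mu u^\theta$ (which furnishes a good $-\mu\int u^{q+\theta-1}$ term in the $L^q$ identity used to tame any residual $\|u\|_{L^\infty}$-powers), the collected estimates yield $y'(t)\leq C\,y(t)(1+y(t)^2)$, whence \eqref{2.5} follows by the explicit ODE comparison above. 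I expect the main obstacle to be the case (i) boundary analysis, specifically verifying that the constant from Lemma \ref{bjt4} is genuinely $\epsilon$-independent, and orchestrating the elliptic/Sobolev bootstrap together with the logarithmic Sobolev closure so that the exponents align into the cubic structure $y(1+y^2)$ needed for the precise finite-horizon bound \eqref{2.5}.
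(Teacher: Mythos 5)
Your overall architecture matches the paper's: an $L^q$ identity from testing with $u^{q-1}$, a parallel identity for $\frac{d}{dt}\int_\Om|\nabla u|^q$, the pointwise identity for $\nabla u\cdot\nabla\Delta u$ with the boundary term killed by convexity in case (ii) and absorbed via Lemma \ref{bjt4} in case (i) (paying only an $\epsilon_0$-dependent zeroth-order price), elliptic estimates along the chain $u\to w\to v$, and a closing Bernoulli inequality $y'\leq C(y+y^3)$ whose explicit solution is \eqref{2.5}. The one genuinely delicate point, however, is where your argument has a gap: you claim $\|\Delta v\|_{L^\infty(\Om)},\|D^2v\|_{L^\infty(\Om)}\leq C\|u\|_{L^\infty(\Om)}$ ``via iterated Schauder estimates.'' No such estimate exists: elliptic regularity does not map $L^\infty$ (or $C^0$) data to $W^{2,\infty}$ (or $C^2$) solutions --- both the Calder\'on--Zygmund and Schauder theories fail at the endpoints $p=\infty$ and $\iota=0$. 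This failure is precisely the reason the paper introduces the logarithmic Sobolev inequality \eqref{fyd}: it is applied not to $u$ (where, as you could have noted, the plain embedding $W^{1,q}(\Om)\hookrightarrow L^\infty(\Om)$ for $q>n$ already suffices and the paper uses exactly that) but to $D^2v$, combining the endpoint substitute $\|D^2v\|_{BMO(\Om)}\leq C\|u\|_{L^\infty(\Om)}$ with $\|D^2v\|_{W^{1,q}(\Om)}\leq C\|u\|_{W^{1,q}(\Om)}$ to obtain $\|D^2v\|_{L^\infty(\Om)}\leq C\bigl(1+\|u\|^2_{W^{1,q}(\Om)}\bigr)$. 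Your proposal instead spends the logarithmic Sobolev inequality on $\|u\|_{L^\infty}$, where it accomplishes nothing, and leaves $\|D^2v\|_{L^\infty}$ resting on a false estimate.

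The gap is repairable --- for this particular system the indirect production gives a double elliptic gain ($u\in L^q\Rightarrow w\in W^{2,q}\Rightarrow v\in W^{4,q}\hookrightarrow C^2$ for $q>n$), so $\|D^2v\|_{L^\infty(\Om)}\leq C\|u\|_{L^q(\Om)}$ can in fact be obtained by higher-order $L^q$ elliptic regularity rather than by any $L^\infty$-based Schauder argument --- but that is a different mechanism from the one you invoke, and you would need to state and use it explicitly (Lemma \ref{2ly} as given only provides the $W^{2,q}$ estimate). As written, the step that controls $\|D^2v\|_{L^\infty}$, which is the crux of the lemma, is not justified.
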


\begin{proof}
We first give an estimate for $\frac{d}{dt}\|u\|_{L^q(\Om)}^q$. Testing the first equality of \eqref{xd1} with $u^{q-1}$ and using integration by part, the second equation of \eqref{xd1} and the nonnegativity of $u, v$ tell us
\begin{equation}\label{lpgj}\begin{split}
\frac{1}{q}\frac{d}{dt}\|u\|_{L^q(\Om)}^q&=-\frac{4\epsilon(q-1)}{q^2}\int_\Om|\nabla u^{\frac{q}{2}}|^{2}+(q-1)\int_\Om u^{q-1}\nabla v\cdot\nabla u\\
&\quad\ +\int_\Om(ru^q-\mu u^{\theta+q-1})\\
&\leq-\frac{(q-1)}{q}\int_\Om u^q\Delta v+r\int_\Om u^q\\
&\leq\frac{\alpha(q-1)}{d_1q}\int_\Om u^q w+r\int_\Om u^q\\
&\leq\left(\frac{\alpha\gamma(q-1)}{d_1q\gamma}\|u\|_{L^\infty(\Om)}+r\right)\|u\|_{L^q(\Om)}^q,
\end{split}\end{equation}
where we also used the elliptic estimate $\|w\|_{L^\infty(\Om)}\leq\frac{\gamma}{\delta}\|u\|_{L^\infty(\Om)}$, which can be found in Lemma \ref{2ly}.

Then we aim to get an estimate for $\frac{d}{dt}\|\nabla u\|_{L^q(\Om)}^q$. By the second equation in \eqref{xd1} we further note
\begin{equation*}\begin{split}
-\nabla\cdot(u\nabla v)&=-\nabla u\cdot\nabla v-u\Delta v\\
&=-\nabla u\cdot\nabla v+\frac{\alpha}{d_1}uw-\frac{\beta}{d_1}uv.
\end{split}\end{equation*}
Then we can rewrite the first equation in \eqref{xd1} as
\begin{equation*}\begin{split}
u_{t}=\epsilon\Delta u-\nabla u\cdot\nabla v+\frac{\alpha}{d_1}uw-\frac{\beta}{d_1}uv+ru-\mu u^\theta.
\end{split}\end{equation*}
Differentiate both sides of above equation for every variable $x_k$, and let $z=u_{x_k}$, here $k=1, 2, \cdots, n$, then
\begin{equation*}\begin{split}
z_{t}=\epsilon\Delta z-\nabla z\cdot\nabla v-\nabla u\cdot\nabla v_{x_k}+\frac{\alpha}{d_1}zw+\frac{\alpha}{d_1}uw_{x_k}-\frac{\beta}{d_1}zv-\frac{\beta}{d_1}uv_{x_k}+rz-\mu\theta u^{\theta-1}z.
\end{split}\end{equation*}
Multiplying above equation by $|\nabla u|^{q-2}z$, adding them for all $k=1, 2, \cdots, n$ and integrating over $\Om$, we obtain
\begin{equation*}\begin{split}
\frac{1}{q}\frac{d}{dt}\|\nabla u\|_{L^q(\Om)}^q&=\epsilon\int_\Om|\nabla u|^{q-2}\nabla u\cdot\nabla\Delta u-\int_\Om|\nabla u|^{q-2}\nabla u\nabla(\nabla u)\nabla v\\
&\quad-\int_\Om|\nabla u|^{q}D^2v+\frac{\alpha}{d_1}\int_\Om|\nabla u|^{q}w+\frac{\alpha}{d_1}\int_\Om|\nabla u|^{q-2}\nabla u\cdot u\nabla w\\
&\quad-\frac{\beta}{d_1}\int_\Om|\nabla u|^{q}v-\frac{\beta}{d_1}\int_\Om|\nabla u|^{q-2}\nabla u\cdot u\nabla v\\
&\quad+r\int_\Om|\nabla u|^{q}-\mu\theta\int_\Om|\nabla u|^{q}u^{\theta-1}.
\end{split}\end{equation*}
We utilize the fact $\Delta|\nabla u|^2=2\nabla u\cdot\nabla\Delta u+2|D^2u|^2$ and Young's inequality as well as the nonnegativity of $u$ and $v$ to see that
\begin{equation}\label{didc}\begin{split}
\frac{1}{q}\frac{d}{dt}\|\nabla u\|_{L^q(\Om)}^q&\leq\epsilon\int_\Om|\nabla u|^{q-2}\left(\frac{1}{2}\Delta|\nabla u|^2-|D^2u|^2\right)-\frac{1}{q}\int_\Om\nabla\left(|\nabla u|^q\right)\cdot\nabla v\\
&\quad+\|D^2v\|_{L^\infty(\Om)}\int_\Om|\nabla u|^{q}+\frac{\alpha}{d_1}\|w\|_{L^\infty(\Om)}\int_\Om|\nabla u|^{q}\\
&\quad+\frac{\alpha(q-1)}{d_1q}\int_\Om|\nabla u|^{q}+\frac{\alpha}{d_1q}\|u\|^q_{L^\infty(\Om)}\int_\Om|\nabla w|^q\\
&\quad+\frac{\beta(q-1)}{d_1q}\int_\Om|\nabla u|^{q}+\frac{\beta}{d_1q}\|u\|^q_{L^\infty(\Om)}\int_\Om|\nabla v|^q+r\int_\Om|\nabla u|^{q}.
\end{split}\end{equation}
It follows from integration by part and the second equation of \eqref{xd1} that
\begin{equation*}\begin{split}
&\epsilon\int_\Om|\nabla u|^{q-2}\left(\frac{1}{2}\Delta|\nabla u|^2-|D^2u|^2\right)-\frac{1}{q}\int_\Om\nabla\left(|\nabla u|^q\right)\cdot\nabla v\\
&\leq\frac{\epsilon}{2}\int_{\partial\Om}|\nabla u|^{q-2}\frac{\partial|\nabla u|^2}{\partial\nu}-\frac{4(q-2)\epsilon}{q^2}\int_\Om\left(\nabla(|\nabla u|^{\frac{q}{2}})\right)^2+\frac{1}{q}\int_\Om|\nabla u|^q\Delta v\\
&\leq\frac{\epsilon}{2}\int_{\partial\Om}|\nabla u|^{q-2}\frac{\partial|\nabla u|^2}{\partial\nu}-\frac{4(q-2)\epsilon}{q^2}\int_\Om\left(\nabla(|\nabla u|^{\frac{q}{2}})\right)^2+\frac{\beta}{d_1q}\int_\Om|\nabla u|^q v.
\end{split}\end{equation*}
When $n=1$, we see $\int_{\partial\Om}|\nabla u|^{q-2}\frac{\partial|\nabla u|^2}{\partial\nu}=0$ due to the Neumann boundary condition. Hence, when $n\geq1$, for the item (i), i.e., $\epsilon\in(0,\ \epsilon_0)$, \eqref{ftgj} tell us
\begin{equation}\label{cslm}\begin{split}
&\epsilon\int_\Om|\nabla u|^{q-2}\left(\frac{1}{2}\Delta|\nabla u|^2-|D^2u|^2\right)-\frac{1}{q}\int_\Om\nabla\left(|\nabla u|^q\right)\cdot\nabla v\\
&\leq\left(C+\frac{\alpha\lambda}{d_1q\gamma}\|u\|_{L^\infty(\Om)}\right)\int_\Om|\nabla u|^q,
\end{split}\end{equation}
where we also used the elliptic estimate twice: $\|v\|_{L^\infty(\Om)}\leq\frac{\alpha}{\beta}\|w\|_{L^\infty(\Om)}\leq\frac{\alpha\gamma}{\beta\delta}\|u\|_{L^\infty(\Om)}$, which can be found in Lemma \ref{2ly}, and $C=C(\epsilon_0, n, \Om)>0$. While for the item (ii), we know $\left.\frac{\partial|\nabla u|^2}{\partial\nu}\right|_{\partial\Om}\leq0$ due to the convexity of $\Om$, so \eqref{cslm} still holds with $C=0$. Thus, for both items (i) and (ii), \eqref{didc} implies
\begin{equation}\label{didc2}\begin{split}
\frac{1}{q}\frac{d}{dt}\|\nabla u\|_{L^q(\Om)}^q&\leq\Psi_0\int_\Om|\nabla u|^{q}+\frac{\alpha+\beta}{d_1q}\|u\|^q_{L^\infty(\Om)}\left(\int_\Om|\nabla w|^q+\int_\Om|\nabla v|^q\right),
\end{split}\end{equation}
where
\begin{equation*}
\Psi_0:=C+\frac{\alpha\gamma(q+1)}{d_1q\delta}\|u\|_{L^\infty(\Om)}+\|D^2v\|_{L^\infty(\Om)},
\end{equation*}
and $C=C(\epsilon_0, \alpha, \beta, d_1, r, q, \Om)>0$ for item (i), while for item (ii), the constant $C$ only depends on $\alpha, \beta, d_1, r, q, \Om$. We combine \eqref{lpgj} and \eqref{didc2} to obtain
\begin{equation}\label{didc9}\begin{split}
\frac{d}{dt}\|u\|_{W^{1, q}(\Om)}^q&\leq\Psi_1\|u\|_{W^{1, q}(\Om)}^q+\frac{\alpha+\beta}{d_1}\|u\|^q_{L^\infty(\Om)}\left(\int_\Om|\nabla w|^q+\int_\Om|\nabla v|^q\right),
\end{split}\end{equation}
where
\begin{equation*}
\Psi_1:=C+\frac{2q\alpha\gamma}{d_1\delta}\|u\|_{L^\infty(\Om)}+ q\|D^2v\|_{L^\infty(\Om)}.
\end{equation*}

With the help of the second equation of \eqref{1} and the elliptic estimate listed in Lemma \ref{2ly} we note there exists positive constant $C$ such that
\begin{equation*}
\|\nabla v\|_{L^q(\Om)}^q\leq \|v\|_{W^{2, q}(\Om)}^q\leq C\|w\|_{L^q(\Om)}^q\leq C\|w\|_{W^{1, q}(\Om)}^q.
\end{equation*}
Similarly, by the third equation of \eqref{1} and the elliptic estimate we further get
\begin{equation*}
\|\nabla w\|_{L^q(\Om)}^q\leq \|w\|_{W^{1, q}(\Om)}^q\leq \|w\|_{W^{2, q}(\Om)}^q\leq C\|u\|_{L^q(\Om)}^q\leq C\|u\|_{W^{1, q}(\Om)}^q.
\end{equation*}
Insert above two relations into \eqref{didc9}, then
\begin{equation}\label{didc10}\begin{split}
\frac{d}{dt}\|u\|_{W^{1, q}(\Om)}^q&\leq\Psi_2\|u\|_{W^{1, q}(\Om)}^q,
\end{split}\end{equation}
where
\begin{equation}\label{sitaa3}
\Psi_2:=C\left(1+\|u\|^q_{L^\infty(\Om)}+\|D^2v\|_{L^\infty(\Om)}\right).
\end{equation}

Finally, we estimate $\Psi_2$. Since $q>n$, by means of Sobolev imbedding theorem we know there exists positive constant such that  $\|u\|^q_{L^\infty(\Om)}\leq C\|u\|^q_{W^{1, q}(\Om)}$. Then, we only need to estimate $\|D^2v\|_{L^\infty(\Om)}$. By BMO and $L^p$ estimates of elliptic equations \cite[Page 259]{wuyin}, for some positive constants there hold
\begin{equation*}
\|D^2v\|_{BMO(\Om)}\leq C\|u\|_{L^\infty(\Om)}\ \mbox{and}\ \|D^2v\|_{W^{1,q}(\Om)}\leq C\|u\|_{W^{1,q}(\Om)}.
\end{equation*}
Applying \eqref{fyd} to $D^2v$ and using Sobolev imbedding theorem again, then from the fact that $\xi\log\xi\leq\xi^2$ for all $\xi>0$ we observe
\begin{equation*}\begin{split}
\|D^2v\|_{L^\infty(\Om)}&\leq C\left[1+\|D^2v\|_{BMO(\Om)}\left(1+\log^+\|D^2v\|_{W^{1,q}(\Om)}\right)\right]\\
&\leq C\left[1+\|u\|_{L^\infty(\Om)}\left(1+\log^+\|u\|_{W^{1,q}(\Om)}\right)\right]\\
&\leq C\left[1+\|u\|_{W^{1,q}(\Om)}\left(1+\log\|u\|_{W^{1,q}(\Om)}\right)\right]\\
&\leq C\left(1+\|u\|^2_{W^{1,q}(\Om)}\right).
\end{split}\end{equation*}
Then \eqref{sitaa3} and Young's inequality lead to $\Psi_2\leq C\left(1+\|u\|^{2q}_{W^{1,q}(\Om)}\right)$. We can infer from \eqref{didc10} that
\begin{equation*}
\frac{d}{dt}\|u\|_{W^{1, q}(\Om)}^q\leq C\left(\|u\|^q_{W^{1,q}(\Om)}+\|u\|^{3q}_{W^{1,q}(\Om)}\right),
\end{equation*}
which is a Bernoulli-type inequality, solving this inequality we can see \eqref{2.5} holds so our proof is complete.
\end{proof}

\section{Proofs of Theorems \ref{jbczs} and \ref{d1q}}
\subsection{Local well-posed for problem \eqref{2}}

Without any restrictions on the space dimension, this section is devoted to find out that whether the transient growth phenomena will hold to problem \eqref{1}. Especial speaking, we consider problem \eqref{1} in non-radial symmetry case, and then obtain clear conditions such that the phenomena shall occur in the sense that the solution surpasses any given threshold. As mentioned above, the main tool in this section are the viscosity vanishing method and some important estimates, then an auxiliary problem with the corresponding hyperbolic-elliptic-elliptic model, i.e., problem \eqref{2}, is needed for us. So, in order to avoid any confusions we shall denote the solution of problem \eqref{1} with any $\epsilon>0$ by $(u_\epsilon, v_\epsilon, w_\epsilon)$ in this section and rewrite problem \eqref{1} as
\begin{equation}\label{xd1}
\left\{
\begin{array}{ll}
u_{\epsilon t}=\epsilon\Delta u_\epsilon-\nabla\cdot(u_\epsilon\nabla v_\epsilon)+ru_\epsilon-\mu u_\epsilon^\theta,\quad &x\in \Omega,\quad t>0,\\
0=d_1\Delta v_\epsilon-\beta v_\epsilon+\alpha w_\epsilon,\quad &x\in \Omega,\quad t>0,\\
0=d_2\Delta w_\epsilon-\delta w_\epsilon+\gamma u_\epsilon,\quad &x\in\Omega,\quad t>0,\\
\ds\frac{\partial u_\epsilon}{\partial\nu }=\frac{\partial v_\epsilon}{\partial \nu}=\frac{\partial w_\epsilon}{\partial\nu }=0,\quad &x\in\partial\Omega,\quad t>0,\\
u_\epsilon(x, 0)=u_{0\epsilon}\, \,\quad &x\in\Omega.
\end{array}
\right.
\end{equation}

In a nutshell, we will prove that the strong $W^{1,q}$-solution to problem \eqref{2} is a approximating solution of problem \eqref{xd1}, i.e., $(u_\epsilon, v_\epsilon, w_\epsilon)\rightarrow(u, v, w)$ as $\epsilon\rightarrow 0$. The main strategies come from \cite{Chemotaxis,kang2016blowup,Winkler2014How}, wherein the corresponding hyperbolic-elliptic type system related to the classical Keller-Segel growth model was studied. Kang and Stevens \cite{kang2016blowup} extended the radially symmetric setting as in \cite{Chemotaxis,Winkler2014How} to a bounded but convex domain via a logarithmic Sobolev inequality in terms of BMO norm and Sobolev norm \cite{Kozono2000}. The novelty of this part is that we extend the hyperbolic-elliptic model to problem \eqref{2} which including indirect signal production mechanism, moreover, we remove the convexity hypothesis in \cite{kang2016blowup}.

We first claim that the $W^{1,q}$-solution of problem \eqref{2} is uniquely determined.
\begin{lemma}\label{wyi}
Problem \eqref{2} possesses at most one strong $W^{1,q}$-solution.
\end{lemma}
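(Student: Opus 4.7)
The plan is to consider two strong $W^{1,q}$-solutions $(u_i, v_i, w_i)$, $i=1,2$, of \eqref{2} sharing the initial datum $u_0$, set $U:=u_1-u_2$, $V:=v_1-v_2$, $W:=w_1-w_2$, and derive a Gronwall inequality of the form $\frac{d}{dt}\|U(\cdot,t)\|_{L^2(\Om)}^2 \leq C\,\|U(\cdot,t)\|_{L^2(\Om)}^2$ on every compact subinterval of $[0,T^*)$. Together with $U(\cdot,0)\equiv 0$, this will force $U\equiv 0$ on $[0,T^*)$, after which the linear elliptic equations for $V$ and $W$ together with Lemma \ref{2ly} immediately give $V\equiv W\equiv 0$.

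Subtracting the two $u$-equations yields
\[
U_t=-\nabla\!\cdot\!(U\nabla v_1)-\nabla\!\cdot\!(u_2\nabla V)+rU-\mu(u_1^\theta-u_2^\theta),
\]
and I would test this against $U$ in the sense of \eqref{13}, justifying the integration by parts by a standard mollification in time. The requisite regularity is in hand: $\nabla v_1\in L^\infty$ by Lemma \ref{2ly}, $u_i\in L^\infty$ via the Sobolev embedding $W^{1,q}(\Om)\hookrightarrow L^\infty(\Om)$ (using $q>n$), and $V\in W^{2,r}(\Om)$ for every finite $r$ is again Lemma \ref{2ly}. The first convection term reduces after integration by parts to $-\tfrac12\int_\Om U^2\Delta v_1$, bounded by $C\|U\|_{L^2}^2$ since $\Delta v_1=(\beta v_1-\alpha w_1)/d_1\in L^\infty$; the logistic contribution is controlled by the mean value theorem and the uniform $L^\infty$-bound on $u_1,u_2$; and the term $-\int_\Om U u_2\,\Delta V$ is handled by replacing $\Delta V$ with $(\beta V-\alpha W)/d_1$ and iterating Lemma \ref{2ly}, which gives $\|V\|_{L^2}\leq C\|W\|_{L^2}\leq C\|U\|_{L^2}$.

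The hard step is the mixed convection contribution $-\int_\Om U\,\nabla u_2\cdot\nabla V$, and this is where I expect the main obstacle to sit. The plan is to apply H\"older's inequality with exponents $(q,2,b)$ where $b:=\frac{2q}{q-2}$, using the uniform local bound on $\|\nabla u_2\|_{L^q(\Om)}$ provided by the definition of the solution class. To handle $\|\nabla V\|_{L^b(\Om)}$, two applications of Lemma \ref{2ly} yield $\|V\|_{W^{2,2}(\Om)}\leq C\|W\|_{L^2(\Om)}\leq C\|U\|_{L^2(\Om)}$, and the Sobolev embedding $W^{2,2}(\Om)\hookrightarrow W^{1,b}(\Om)$ is available precisely when $b\leq 2n/(n-2)$, which rearranges exactly to $q\geq n$ (and is automatic when $n\in\{1,2\}$). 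This is the only place where the standing hypothesis $q>n$ is used in a sharp way, and it is what forces the $L^2$ energy argument to exploit the full $W^{1,q}$-regularity of both candidate solutions rather than a mere $L^\infty$-bound. Once this estimate is in hand, everything collapses to the announced Gronwall inequality, and the argument concludes.
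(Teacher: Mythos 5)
Your strategy has the same skeleton as the paper's (subtract the two solutions, test the weak formulation with a Steklov-type time average of a power of $U$, estimate term by term, Gronwall), but you run the energy in $L^2$ whereas the paper runs it in $L^q$: its test function is $\frac{\chi_\delta(t)}{h}\int_t^{t+h}U(U^2+\eta)^{\frac q2-1}$, of which your mollified $L^2$ pairing is the $q=2$, $\eta$-free instance, and it closes the mixed convection term $\int_\Om U\,\nabla\bar u\cdot\nabla V$ by the bound $\|\nabla V\|_{L^\infty(\Om)}\leq C\|V\|_{W^{2,q}(\Om)}\leq C\|U\|_{L^q(\Om)}$ (two applications of Lemma \ref{2ly} plus $W^{2,q}\hookrightarrow C^1$ for $q>n$), paired against $\|\nabla\bar u\|_{L^q(\Om)}\,\|U|U|^{q-2}\|_{L^{q/(q-1)}(\Om)}$. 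Your alternative closure via H\"older with exponents $(q,2,b)$, $b=\frac{2q}{q-2}$, and the embedding $W^{1,2}(\Om)\hookrightarrow L^b(\Om)$ is correct, and your computation that $b\leq\frac{2n}{n-2}$ is equivalent to $q\geq n$ is right; the $L^2$ route is arguably cleaner since it avoids the $\eta$-regularization of $|U|^{q-2}U$ and the limit $\eta\to0$.

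There is, however, one concrete gap: your H\"older triple only makes sense when $q>2$, since otherwise $b=\frac{2q}{q-2}$ is not a positive exponent. This covers all $n\geq2$ (where $q>n\geq2$), but for $n=1$ the standing hypothesis only gives $q>1$, and when $q\in(1,2]$ the term $\int_\Om U\,\nabla\bar u\cdot\nabla V$ cannot be split your way: one is forced to put $U$ into $L^{q/(q-1)}$ with $q/(q-1)>2$, which on a bounded domain is \emph{not} controlled by $\|U\|_{L^2(\Om)}$, and interpolating against $\|U\|_{L^\infty}$ produces $\frac{d}{dt}\|U\|_{L^2}^2\leq C\|U\|_{L^2}^2+C\|U\|_{L^2}^{2\alpha}$ with $\alpha<1$, for which $U(\cdot,0)=0$ no longer forces $U\equiv0$. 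This is precisely the case the paper's choice of exponent handles automatically: running the energy in $L^q$ makes the dual exponent $q/(q-1)$ land back on $\|U\|_{L^q}^{q-1}$, so the inequality stays linear in $\int_\Om|U|^q$ for every $q>n\geq1$. Either restrict your argument to $q>2$ and treat $n=1$, $q\in(1,2]$ by the $L^q$ energy (or by first noting $u\in W^{1,q}\hookrightarrow L^\infty$ in one dimension and redoing the split), or simply adopt the paper's exponent; the rest of your estimates (the $-\frac12\int_\Om U^2\Delta v_1$ identity, the elliptic chain $\|V\|_{W^{2,2}}\leq C\|W\|_{L^2}\leq C\|U\|_{L^2}$, and the sign/Lipschitz control of the logistic difference) match the paper's and are fine.
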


\begin{proof}
The uniqueness can be proved by a contradiction argument. However, due to the lack of the regularity of $u$ to problem \eqref{2}, we shall choose suitable test function as in \cite[Lemma 4.2]{Chemotaxis}, \cite[Proposition 2.1]{tian2018hyperbolic} and \cite[Lemma 4.2]{Winkler2014How} and then use Gronwall's lemma to obtain the conclusion. For convenience of reference, we give some main points and necessary adjustments here.

Assume $(u, v, w)$ and $(\bar u, \bar v, \bar w)$ were two strong $W^{1,q}$-solutions of problem \eqref{2}, then from the Definition \ref{21} we know $(U, V, W):=(u-\bar u, v-\bar v, w-\bar w)$ satisfy
\begin{equation}\begin{split}\label{14}
&-\int_0^T\int_\Om U\varphi_t\\
&=\int_0^T\int_\Om\left[\left( u\nabla v-\bar u\nabla\bar v\right)\nabla\varphi+rU\varphi-\mu U\left(u^{\theta-1}+u^{\theta-2}\bar u+\cdots+ u\bar{u}^{\theta-2}+\bar u^{\theta-1}\right)\varphi\right]\\
&=\int_0^T\int_\Om\bigg[-\nabla u\nabla v-u\Delta v+\nabla\bar u\nabla\bar v+\bar u\Delta\bar v\\
&\quad+U\left(r-\mu u^{\theta-1}-\mu u^{\theta-2}\bar u-\cdots-\mu u\bar{u}^{\theta-2}-\mu \bar u^{\theta-1}\right)\bigg]\varphi.
\end{split}\end{equation}
for all $\varphi\in C_0^\infty(\bar\Om\times[0,T))$, and
\begin{equation}\label{15}\begin{split}
&0=d_1\Delta V-\beta V+\alpha W,\\
&0=d_2\Delta W-\delta W+\gamma U.
\end{split}\end{equation}
Let $T_0\in(0, T)$, for $t_0\in (0, T_0)$ and $\delta\in(0, \frac{T-t_0}{2})$ we define $\chi_\delta(t)$ by
\begin{eqnarray*}
\chi_\delta(t)=
\left\{
\begin{array}{ll}
1,\quad &t<t_0,\\
\frac{t_0-t+\delta}{\delta},\quad & t\in[t_0, t_0+\delta],\\
0,\quad & t>t_0+\delta.
\end{array}
\right.
\end{eqnarray*}
For such $\delta$ and $h\in(0, \frac{T_0-t_0}{2})$ and $\eta\in(0, 1)$ we further define
\begin{equation*}
\varphi(x,t)=\frac{\chi_\delta(t)}{h}\int_{t}^{t+h}U(U^2+\eta)^{\frac{q}{2}-1},\quad (x,t)\in\Om\times(0,T).
\end{equation*}
Then we can take $\varphi(x,t)$ as a test function in \eqref{14} and use the definition of $\varphi(x,t)$ to obtain
\begin{equation*}\begin{split}
&\frac{1}{\delta}\int_{t_0}^{t_0+\delta}\int_\Om\frac{U}{h}\int_{t}^{t+h}U(U^2+\eta)^{\frac{q}{2}-1}\\
&\quad-\int_0^T\int_\Om\chi_\delta(t)U\frac{U(x,t+h)(U^2(x,t+h)+\eta)^{\frac{q}{2}-1}-U(U^2+\eta)^{\frac{q}{2}-1}}{h}\\
&=\int_0^T\int_\Om\chi_\delta(t)\frac{-\nabla u\nabla v-u\Delta v+\nabla\bar u\nabla\bar v+\bar u\Delta\bar v}{h}\int_{t}^{t+h}U(U^2+\eta)^{\frac{q}{2}-1}\\
&\quad +\int_0^T\int_\Om\chi_\delta(t)\frac{U\left(r-\mu u^{\theta-1}-\mu u^{\theta-2}\bar u-\cdots-\mu u\bar{u}^{\theta-2}-\mu \bar u^{\theta-1}\right)}{h}\int_{t}^{t+h}U(U^2+\eta)^{\frac{q}{2}-1}\\
&=\int_0^T\int_\Om\chi_\delta(t)\frac{-\nabla U\nabla v-U\Delta v-\nabla\bar u\nabla V-\bar u\Delta V}{h}\int_{t}^{t+h}U(U^2+\eta)^{\frac{q}{2}-1}\\
&\quad +\int_0^T\int_\Om\chi_\delta(t)\frac{U\left(r-\mu u^{\theta-1}-\mu u^{\theta-2}\bar u-\cdots-\mu u\bar{u}^{\theta-2}-\mu \bar u^{\theta-1}\right)}{h}\int_{t}^{t+h}U(U^2+\eta)^{\frac{q}{2}-1}.
\end{split}\end{equation*}
Using Lebesgue's theorem and the fact that $\bar u, \nabla v, \Delta v, U, u$ are bounded and $\nabla U$ is uniformly bounded in $L^q(\Omega)$, then with the help of the same argument in the proof of \cite[Lemma 4.2]{Chemotaxis} we can take $\delta\rightarrow0$ and $\eta\rightarrow0$ to see
\begin{equation*}\begin{split}
&\int_\Om\frac{U(x,t_0)}{h}\int_{t_0}^{t_0+h}U|U|^{q-2}+\frac{q-1}{hq}\int_0^{h}\int_\Om |U|^{q}-\frac{q-1}{hq}\int_{t_0}^{t_0+h}\int_\Om |U|^q\\
&\leq\int_0^{t_0}\int_\Om\frac{-\nabla U\nabla v-U\Delta v-\nabla\bar u\nabla V-\bar u\Delta V}{h}\int_{t}^{t+h}U|U|^{q-2}\\
&\quad+\int_0^{t_0}\int_\Om\frac{U\left(r-\mu u^{\theta-1}-\mu u^{\theta-2}\bar u-\cdots-\mu u\bar{u}^{\theta-2}-\mu \bar u^{\theta-1}\right)}{h}\int_{t}^{t+h}U|U|^{q-2}.
\end{split}\end{equation*}
As $U$ is continuous on $\bar\Omega\times [0, T_0]$ and $U(\cdot, 0)=0$ in $\Omega$, then let $h\rightarrow 0$ in above inequality yields
\begin{equation}\label{171}\begin{split}
\frac{1}{q}\int_\Om |U(x,t_0)|^q&\leq\int_0^{t_0}\int_\Om\left(-\nabla U\nabla v-U\Delta v-\nabla\bar u\nabla V-\bar u\Delta V\right)\cdot U|U|^{q-2}\\
&\quad +\int_0^{t_0}\int_\Om\left(r-\mu u^{\theta-1}-\mu u^{\theta-2}\bar u-\cdots-\mu u\bar{u}^{\theta-2}-\mu \bar u^{\theta-1}\right)\cdot |U|^{q}.
\end{split}\end{equation}

Finally, we will estimate the integrations on the right hand side to obtain an expression that allows to conclude $U=0$ by means of Gronwall's inequality. In this end, we define some constants for convenience'sake: $c_1=\|u\|_{L^\infty(\Om\times(0,T_0))}, \ c_2=\|v\|_{L^\infty(\Om\times(0,T_0))},\ c_3=\|\bar u\|_{L^\infty(\Om\times(0,T_0))}$ and $c_4=\|\bar u\|_{L^\infty((0,T_0); W^{1,q}(\Om))}$. We first integrate by parts and use the second equation in \eqref{2} to conclude
\begin{equation*}\begin{split}
-\int_0^{t_0}\int_\Om \nabla U\nabla vU|U|^{q-2}&=\frac{1}{q}\int_0^{t_0}\int_\Om |U|^q\Delta v\\
&=\frac{1}{q}\int_0^{t_0}\int_\Om |U|^q\left(\frac{\beta v-\alpha w}{d_1}\right)\\
&\leq\frac{\beta}{qd_1}c_2\int_0^{t_0}\int_\Om |U|^q.
\end{split}\end{equation*}
Similarly, for the next term in the right hand side of \eqref{171} we have
\begin{equation*}\label{17}\begin{split}
-\int_0^{t_0}\int_\Om \Delta v|U|^{q}\leq\frac{\alpha c_1}{d_1}\int_0^{t_0}\int_\Om |U|^q.
\end{split}\end{equation*}
Applying Lemma \ref{2ly} twice to the equations in \eqref{15} and using by H\"{o}lder's inequality we know there is a constant $C>0$ such that
\begin{equation*}\label{17}\begin{split}
-\int_0^{t_0}\int_\Om \nabla\bar u\nabla VU|U|^{q-2}&\leq c_4\int_0^{t_0}\|\nabla V\|_{L^q(\Om)}\left\|U|U|^{q-2}\right\|_{L^{\frac{q}{q-1}}(\Om)}\\
&\leq c_4C\int_0^{t_0}\|U\|_{L^q(\Om)}\|U\|^{q-1}_{L^q(\Om)}.
\end{split}\end{equation*}
Similarly,
\begin{equation*}\begin{split}
-\int_0^{t_0}\int_\Om \bar u\Delta VU|U|^{q-2}&= c_3\int_0^{t_0}\|\Delta V\|_{L^q(\Om)}\left\|U|U|^{q-2}\right\|_{L^{\frac{q}{q-1}}(\Om)}\\
&\leq c_3C\int_0^{t_0}\|U\|_{L^q(\Om)}\|U\|^{q-1}_{L^q(\Om)}.
\end{split}\end{equation*}
Using of the nonnegativity of $u$ and $\bar u$ we can see
\begin{equation*}\label{17}\begin{split}
\int_0^{t_0}\int_\Om\left(r-\mu u^{\theta-1}-\mu u^{\theta-2}\bar u-\cdots-\mu u\bar{u}^{\theta-2}-\mu \bar u^{\theta-1}\right)\cdot |U|^{q}\leq r\int_0^{t_0}\int_\Om |U|^q.
\end{split}\end{equation*}
Gathering all these estimates together we see for $t_0\in (0, T_0)$ and constant $C>0$
\begin{equation*}
\int_\Om |U(x,t_0)|^q\leq C\int_0^{t_0}\int_\Om |U|^q,
\end{equation*}
hence it follows from Gronwall's lemma that $U=0$, then $W=0$, this further implies $V=0$, so our proof is complete.
\end{proof}

We next give a general convergence result of solutions between $(u_\epsilon, v_\epsilon, w_\epsilon)$ and $(u, v, w)$.

\begin{lemma}\label{25}
Let the initial value of problem \eqref{2} satisfy $u_0\in W^{1, q}(\Om), q>n, T>0$. For some zero sequence $(\epsilon)_{j\in\mathbb{N}}\subset(0, 1)$ such that $\epsilon\rightarrow 0$ as $j\rightarrow\infty$, if for any $\epsilon_j\in(\epsilon_j)_{j\in\mathbb{N}}$ negative initial data $\ u_{0\epsilon_{j}}$ of problem \eqref{xd1} with $\|u_{0\epsilon_{j}}-u_0\|_{W^{1,q}(\Om)}<\epsilon_{j}$ the corresponding solution $u_{d_{j}}$ satisfies
\begin{equation*}\label{18}
\|u_{\epsilon_{j}}\|_{L^\infty(\Om)}\leq M,\quad (x, t)\in\Om\times(0, T),
\end{equation*}
then there exists a strong $W^{1,q}$-solution $(u, v, w)$ of problem \eqref{2} in $\Om\times(0, T)$ such that
\begin{equation}\label{zg}\begin{split}
&u_{\epsilon_{j}}\rightarrow u\ \ \mbox{ in }\ C(\bar\Om\times[0,T]),\\
&u_{\epsilon_{j}}\rightharpoonup^{*} u\ \mbox{ in }\ L^\infty((0,T); W^{1,q}(\Omega))\\
&v_{\epsilon_{j}}\rightarrow v\ \mbox{ in }\ C^{2,0}(\bar\Om\times[0,T]),\\
&w_{\epsilon_{j}}\rightarrow w\ \mbox{ in }\ C^{2,0}(\bar\Om\times[0,T]),
\end{split}\end{equation}
as $\epsilon_{j}\rightarrow0$, i.e., $j\rightarrow\infty$.
\end{lemma}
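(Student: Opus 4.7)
The plan is to derive uniform-in-$\epsilon_j$ bounds for the sequence $\{(u_{\epsilon_j}, v_{\epsilon_j}, w_{\epsilon_j})\}$ on $[0,T]$, apply a compactness argument, and pass to the limit in the weak formulation of \eqref{xd1}.

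First, I would promote the $L^\infty$ hypothesis into a uniform bound $\|u_{\epsilon_j}\|_{L^\infty((0,T); W^{1,q}(\Om))}\leq C$. Applying Lemma \ref{2ly} twice to the two elliptic equations immediately yields $w_{\epsilon_j}$ and $v_{\epsilon_j}$ bounded in $L^\infty((0,T); W^{2,p}(\Om))$ for every $p\in[1,\infty)$, uniformly in $\epsilon_j$. For the first component I would revisit the proof of Lemma \ref{144444}: since $\|u_{\epsilon_j}\|_{L^\infty}\leq M$, the factor $\|D^2 v_{\epsilon_j}\|_{L^\infty}$ appearing in the quantity $\Psi_2$ can be estimated via the logarithmic Sobolev inequality \eqref{fyd} by $C(M)\bigl(1+\log^+\|u_{\epsilon_j}\|_{W^{1,q}(\Om)}\bigr)$. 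This converts the Bernoulli-type differential inequality \eqref{didc10} into an Osgood-type inequality, whose solutions do not blow up in finite time, so the $W^{1,q}$-bound persists on the whole interval $[0,T]$.

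Second, I would extract time regularity and compactness. Testing the first equation of \eqref{xd1} against $\varphi\in W^{1,q'}(\Om)$ with $q'=q/(q-1)$ and integrating by parts (boundary terms vanishing by the Neumann condition) gives $\|u_{\epsilon_j,t}\|_{L^\infty((0,T);(W^{1,q'}(\Om))^*)}\leq C$, uniformly in $\epsilon_j$. Since $q>n$ ensures the chain $W^{1,q}(\Om)\hookrightarrow\hookrightarrow C(\bar\Om)\hookrightarrow (W^{1,q'}(\Om))^*$, the Aubin-Lions-Simon compactness lemma provides a subsequence along which $u_{\epsilon_j}\to u$ in $C(\bar\Om\times[0,T])$, while Banach-Alaoglu yields $u_{\epsilon_j}\rightharpoonup^{*} u$ in $L^\infty((0,T); W^{1,q}(\Om))$. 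Invoking Lemma \ref{2ly} on the $C^\iota$-convergent sequence $u_{\epsilon_j}\to u$ (using Morrey's embedding together with Arzel\`a--Ascoli to interpolate uniform convergence against the uniform $W^{1,q}$-bound) upgrades the convergence of $w_{\epsilon_j}, v_{\epsilon_j}$ to $C^{2,0}(\bar\Om\times[0,T])$.

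Third, I would pass to the limit in the weak form of \eqref{xd1} tested against $\varphi\in C_0^\infty(\bar\Om\times[0,T))$: the viscous contribution $\epsilon_j\int_0^T\int_\Om\nabla u_{\epsilon_j}\cdot\nabla\varphi$ is bounded by $C\epsilon_j\|\varphi\|_{W^{1,q'}}\to 0$, while uniform convergence of $u_{\epsilon_j}, v_{\epsilon_j}$ together with the hypothesis $\|u_{0\epsilon_j}-u_0\|_{W^{1,q}}<\epsilon_j$ takes care of the remaining terms, yielding \eqref{13}; uniqueness (Lemma \ref{wyi}) then promotes the subsequential convergence to convergence of the entire sequence. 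The main obstacle is the Osgood-type refinement in the first step: without exploiting the $L^\infty$ assumption through \eqref{fyd}, Lemma \ref{144444} only yields a bound on $\|u_{\epsilon_j}\|_{W^{1,q}}$ that is potentially finite-time-blow-up, insufficient to cover $[0,T]$, and it must be replaced by the slower-growing logarithmic estimate that relies crucially on the uniform sup-norm control of $u_{\epsilon_j}$.
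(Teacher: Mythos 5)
Your proposal is correct and follows the same overall architecture as the paper's proof: uniform a priori bounds, a time-derivative estimate in $(W^{1,q/(q-1)}(\Om))^*$, an Aubin--Lions/Simon compactness step giving $u_{\epsilon_j}\rightarrow u$ in $C(\bar\Om\times[0,T])$ and weak-$*$ convergence in $L^\infty((0,T);W^{1,q}(\Om))$, elliptic estimates (Lemma \ref{2ly}) applied to differences to upgrade $v_{\epsilon_j},w_{\epsilon_j}$ to $C^{2,0}$-convergence, passage to the limit in the weak formulation with the viscous term of order $\epsilon_j$, and uniqueness (Lemma \ref{wyi}) to promote subsequential to full-sequence convergence. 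The one place where you genuinely diverge is the first step. The paper simply cites Lemma \ref{144444}, whose Bernoulli-type bound \eqref{2.5} degenerates at a finite time determined by $\|u_0\|_{W^{1,q}(\Om)}$, so as written it only furnishes the uniform $W^{1,q}$-bound on a possibly shorter interval than the $T$ appearing in the hypotheses of Lemma \ref{25}. You instead feed the assumed sup-norm bound $\|u_{\epsilon_j}\|_{L^\infty(\Om)}\leq M$ back into the differential inequality through the logarithmic Sobolev inequality \eqref{fyd}, obtaining $\frac{d}{dt}\|u_{\epsilon_j}\|_{W^{1,q}(\Om)}^q\leq C(M)\bigl(1+\log^+\|u_{\epsilon_j}\|_{W^{1,q}(\Om)}\bigr)\|u_{\epsilon_j}\|_{W^{1,q}(\Om)}^q$, an Osgood-type inequality whose (doubly exponential) solution is finite on all of $[0,T]$ for arbitrary $T$. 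This refinement is exactly what is needed for the lemma as stated, is in the spirit of the original argument of Winkler and of Kang--Stevens, and makes your version of this step more complete than the paper's; the rest of your argument, including the interpolation of the uniform $C^0$-convergence against the $W^{1,q}$-bound to get $C^\iota$-convergence before applying the Schauder-type estimate, matches the paper's Cauchy-sequence argument for $v$ and $w$.
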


\begin{proof}
By Lemma \ref{144444} we know there is a $T>0$ such that for all $q>n$,
\begin{equation}\label{uyj}
u_\epsilon(x,t)\in L^\infty([0, T]; W^{1, q}(\Om)).
\end{equation}
Then, we can utilize elliptic estimates to the two elliptic equations in \eqref{xd1} to further get $w_\epsilon\in L^\infty([0, T]; W^{3, q}(\Om))$ and $v_\epsilon\in L^\infty([0, T]; W^{5, q}(\Om))$. Since the positive coefficient $C$ \eqref{2.5} is independent of $\epsilon$ and $t$, the boundedness we obtained is uniform in time and $\epsilon$. Furthermore, due to the boundedness of $\epsilon$, as noted in Remark \ref{r3} already, we can get a suitable strong regularity of the time derivative of solution to problem \eqref{xd1} by the same ways as in \cite[Lemma 3.4]{Winkler2014How} and \cite[Lemma 3.10]{Chemotaxis}, that is, for $p>1$,
\begin{equation}\label{utyj}
u_{\epsilon t}(x,t)\in L^p\left([0, T]; (W^{1, \frac{q}{q-1}}(\Om))^*\right).
\end{equation}
Hence, the rest of the proof can be done by a standard compactness argument, cf. \cite[Lemma 4.4]{Chemotaxis} and \cite[Lemma 4.3]{Winkler2014How}. We include a short proof here for the sake of completeness and readers' convenience.

Make use of \eqref{uyj}, \eqref{utyj} and a variant of the classical Aubin-Lions lemma (see \cite[Lemma 4.4]{Winkler2014How}), we know for any subsequence of $(\epsilon_j)_j$ one can pick a further subsequence thereof such that
\begin{equation}\label{sxk}\begin{split}
&u_{\epsilon_{j_i}}\rightarrow u\ \ \mbox{ in }\ C([0,T]; C^\iota(\Om)),\\
&u_{\epsilon_{j_i}}\rightharpoonup^{*} u\ \mbox{ in }\ L^\infty((0,T); W^{1,q}(\Omega))
\end{split}\end{equation}
as $i\rightarrow\infty$. Moreover, from the propagation of the Cauchy-property and Lemma \ref{2ly} we know that there exists positive constant $C$ such that
\begin{equation*}\begin{split}
\left\|w_{\epsilon_{j_i}}-w_{\epsilon_{j_k}}\right\|_{C^{2,0}(\Om\times[0,T])}&\leq\left\|w_{\epsilon_{j_i}}-w_{\epsilon_{j_k}}\right\|_{C^{2+\iota,0}
(\Om\times[0,T])}\\
&\leq C\left\|u_{\epsilon_{j_i}}-u_{\epsilon_{j_k}}\right\|_{C^{\iota,0}(\Om\times[0,T])}
\end{split}\end{equation*}
for any $i, k\in\mathbb{N}$. Thus, \eqref{sxk} gives $w_{\epsilon_{j_i}}\rightarrow w$ in $C^{2,0}(\Om\times[0,T])$. Similarly, we can also get the convergence for element $v$ in $C^{2,0}(\Om\times[0,T])$, thus \eqref{zg} holds. Finally, testing the first equation of problem \eqref{xd1} by an arbitrary $\varphi\in C_0^\infty([0, T); \bar\Omega)$,
\begin{equation*}\begin{split}
&-\int_0^T\int_\Om u_\epsilon\varphi_t-\int_\Om u_{0\epsilon}\varphi(\cdot, 0)\\
&=-\epsilon\int_0^T\int_\Om\nabla u_\epsilon\nabla\varphi+\int_0^T\int_\Om u_\epsilon\nabla v_\epsilon\nabla\varphi+\int_0^T\int_\Omega (ru_\epsilon-\mu u_\epsilon^\theta)\varphi.
\end{split}\end{equation*}
Taking $\epsilon=\epsilon_{j_i}\rightarrow0$ in above equation, \eqref{zg} implies \eqref{13} immediately. Moreover, due to the uniqueness of solution as obtained in Lemma \ref{wyi}, we know the whole sequence converges to the solution $(u, v, w)$ of problem \eqref{2}.
\end{proof}

Now we show the local existence of the strong $W^{1,p}$-solution of problem \eqref{2} under small initial data.

\begin{lemma}\label{3.2}
Let $D>0$ be any constant, $(\epsilon_j)_{j\in\mathbb{N}}$ be the zero sequence given by Lemma \ref{25} as well as the initial value of problem \eqref{2} satisfies $\|u_0\|_{W^{1,q}(\Om)}\leq D$, then there is number $T(D)>0$ such that problem \eqref{2} admits a strong $W^{1, p}$-solution $(u, v, w)$ in $\Omega\times(0, T(D))$. Furthermore, for some $\epsilon_j\in(\epsilon_j)_{j\in\mathbb{N}}$ if the initial value $u_{0\epsilon_{j}}$ of problem \eqref{xd1} satisfies
\begin{equation}\label{ng2}
\|u_{0\epsilon_{j}}-u_0\|_{W^{1,q}(\Om)}<\epsilon_{j},
\end{equation}
then $(u, v, w)$ can be approximated by solution $(u_{\epsilon_j}, v_{\epsilon_j}, w_{\epsilon_j})$ of problem \eqref{xd1} in the following sense:
\begin{equation*}\label{slu}\begin{split}
&u_{\epsilon_j}\rightarrow u\  \ \mbox{ in }\ C(\bar\Om\times[0, T(D)]),\\
&u_{\epsilon_j}\rightharpoonup^{*} u\ \ \mbox{ in }\ L^\infty((0, T(D)); W^{1,q}(\Omega))\\
&v_{\epsilon_j}\rightarrow v\ \mbox{ in }\ C^{2, 0}(\bar\Om\times[0, T(D)]),\\
&w_{\epsilon_j}\rightarrow w\ \mbox{ in }\ C^{2,0}(\bar\Om\times[0, T(D)])
\end{split}\end{equation*}
as $\epsilon_j\rightarrow0$. Moreover, element $u$ of problem \eqref{2} satisfies the estimate \eqref{2.5}.
\end{lemma}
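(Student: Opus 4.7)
My strategy is to construct the strong $W^{1,q}$-solution of \eqref{2} as the viscosity-vanishing limit of classical solutions of \eqref{xd1} along the zero sequence $(\epsilon_j)$, using Lemma \ref{144444} to secure a uniform-in-$j$ lifespan and Lemma \ref{25} to actually pass to the limit. First, for each $j$ I pick an approximation $u_{0\epsilon_j}$ of $u_0$ satisfying \eqref{ng2} (by standard mollification compatible with the Neumann data); the triangle inequality then gives $\|u_{0\epsilon_j}\|_{W^{1,q}(\Om)} \leq D+1$. Lemma \ref{3.1} produces a classical solution $(u_{\epsilon_j}, v_{\epsilon_j}, w_{\epsilon_j})$ of \eqref{xd1} on some maximal interval $[0, T_{\max,\epsilon_j})$.

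The key step is to find a common existence time on which the approximating solutions remain bounded independently of $j$. Since $(\epsilon_j) \subset (0,1)$, I apply item (i) of Lemma \ref{144444} with $\epsilon_0 = 1$; the crucial feature is that the constant $C$ appearing in \eqref{2.5} depends only on the fixed parameters $r, d_1, d_2, \alpha, \beta, \gamma, \delta, \Om$ and on $\epsilon_0$, not on $\epsilon_j$ itself. Evaluating \eqref{2.5} with $\|u_{0\epsilon_j}\|_{W^{1,q}(\Om)} \leq D+1$, the right-hand side stays finite on some interval $[0, T(D)]$ whose length depends only on $D$ and the fixed data. This yields a uniform bound
\begin{equation*}
\sup_{j \in \mathbb{N}} \sup_{t \in [0, T(D)]} \|u_{\epsilon_j}(\cdot, t)\|_{W^{1,q}(\Om)} \leq K(D),
\end{equation*}
and since $q > n$, the Sobolev embedding $W^{1,q}(\Om) \hookrightarrow L^\infty(\Om)$ upgrades this to $\|u_{\epsilon_j}\|_{L^\infty(\Om \times (0, T(D)))} \leq M(D)$. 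The extensibility criterion of Lemma \ref{3.1} then forces $T_{\max,\epsilon_j} > T(D)$ for every $j$.

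With the uniform $L^\infty$-bound on hand, the hypothesis of Lemma \ref{25} is met on $\Om \times (0, T(D))$. Applying that lemma yields a limit triple $(u, v, w)$ together with the four convergence modes asserted in the statement, and by Definition \ref{21} this limit is a strong $W^{1,q}$-solution of \eqref{2}. Uniqueness from Lemma \ref{wyi} upgrades subsequential convergence to convergence of the full sequence $(u_{\epsilon_j}, v_{\epsilon_j}, w_{\epsilon_j})$. The estimate \eqref{2.5} for the limit $u$ itself then follows by combining the weak-$\ast$ lower semicontinuity of $\|\cdot\|_{W^{1,q}(\Om)}$ with the strong convergence $u_{\epsilon_j} \to u$ in $C(\bar\Om \times [0, T(D)])$, so the Bernoulli bound passes to the limit unchanged.

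The main obstacle is conceptually small but structurally essential: one must be certain that the constant $C$ governing \eqref{2.5} does not degenerate as $\epsilon_j \to 0$. This is exactly the reason Lemma \ref{144444} is stated in two alternatives, with item (i) requiring only $\epsilon \in (0, \epsilon_0)$ and producing a bound depending on $\epsilon_0$ but not on the particular $\epsilon$; once $\epsilon_0 = 1$ is fixed, the rest of the argument is the routine assembly of the Bernoulli-type $W^{1,q}$ bound, the Sobolev embedding into $L^\infty$, and the compactness machinery already packaged in Lemma \ref{25}.
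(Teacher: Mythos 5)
Your proposal is correct and follows essentially the same route as the paper: a uniform-in-$j$ bound from Lemma \ref{144444} (with the constant in \eqref{2.5} independent of $\epsilon_j$) secures a common existence time $T(D)$, Lemma \ref{25} supplies the limit solution with the stated convergence modes, and the estimate \eqref{2.5} for the limit is recovered from \eqref{ng2} together with weak($*$) lower semicontinuity of the $W^{1,q}$-norm along a subsequence. Your proposal is in fact slightly more explicit than the paper's proof in spelling out the Sobolev embedding into $L^\infty$ and the use of the extensibility criterion to guarantee $T_{\max,\epsilon_j}>T(D)$, but these are details the paper leaves implicit rather than a different argument.
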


\begin{proof}
We can infer from Lemma \ref{144444} that solutions $u_{\epsilon_j}$ to problem \eqref{xd1} exists bounded on $(0, T(D))$, then Lemma \ref{25} tells us there is a strong $W^{1,q}$-solution of problem \eqref{2} with the claimed approximation properties, so we only need to prove estimate \eqref{2.5} for solution of problem \eqref{2}.

According to \eqref{2.5} we know the solution component $u_{\epsilon_j}$ of problem \eqref{xd1} satisfies
\begin{equation*}
\|u_{\epsilon_j}\|_{W^{1, q}(\Om)}^q\leq\frac{1}{\sqrt{\left|\left(\|u_{0\epsilon_j}\|_{W^{1, q}(\Om)}^{-2q}+1\right)e^{-2Ct}-1\right|}}.
\end{equation*}
It follows from \eqref{ng2} that the convergence of the right hand side is obvious as $\epsilon_j\rightarrow0$, moreover, Lemma \ref{144444} implies the boundedness of $(u_{\epsilon_j})_j$ and $(\nabla u_{\epsilon_j})_j$ in $L^q(\Om)$, so by $L^q$-weak convergence along a subsequence and the weak lower semicontinuity of the norm we can obtain
\begin{equation*}\begin{split}
\|u(\cdot, t)\|_{W^{1, q}(\Om)}^q&\leq\liminf_{k\rightarrow\infty}\left(\|u_{\epsilon_{j_k}}(\cdot, t)\|_{W^{1, q}(\Om)}^q\right)\\
&\leq\frac{1}{\sqrt{\left|\left(\|u_{0}\|_{W^{1, q}(\Om)}^{-2q}+1\right)e^{-2Ct}-1\right|}},
\end{split}\end{equation*}
then our proof is complete.
\end{proof}

Solutions of problem \eqref{2} constructed up to now may only exist with small initial data on time interval $(0, T(D))$. We next give the proof of Theorem \ref{jbczs} which suggests that the solution problem \eqref{2} can exist on a maximal time interval and is locally well-posed (cf. \cite[Theorem 1.2]{Winkler2014How}).

\begin{proof}[Proof of Theorem \ref{jbczs}]
With the help of Lemma \ref{wyi}, we can apply Lemma \ref{3.2} with $D:=\|u_0\|_{W^{1,q}(\Om)}$ to gain $T>0$ and a unique strong $W^{1, q}$-solution of problem \eqref{2} in $\Om\times(0, T)$ fulfilling
\begin{equation}\label{4.47}
\begin{split}
\|u(\cdot, t)\|_{W^{1, q}(\Om)}^q\leq\frac{1}{\sqrt{\left|\left(\|u_{0}\|_{W^{1, q}(\Om)}^{-2q}+1\right)e^{-2Ct}-1\right|}},
\end{split}
\end{equation}
for almost every $t\in(0, T)$. Accordingly, the set
\begin{equation*}
\begin{split}
S:&=\left\{\tilde{T}>0\ |\ \exists\ \mbox{strong}\ W^{1, q}\mbox{-solution of problem \eqref{2} in}\ \Om\times(0, T)\right.\\
 &\quad\quad\quad\quad\quad\ \left.\mbox{that satisfies \eqref{4.47} for a.e.}\ t\in(0, \tilde{T})\right\}
\end{split}
\end{equation*}
is not empty and $T^*:=\sup S\leq\infty$ is well-defined. The $L^1$-norm boundedness of $u$ to problem \eqref{2} as given by \eqref{hy} can be got by the ODE comparison and a suitable test function as in \cite[Lemma 4.1]{Winkler2014How}, \cite[Lemma 4.7]{Chemotaxis} and \cite[Lemma 4.1]{xu2020carrying}, we omit the detail here. We only have to show the extensibility criterion
\begin{equation*}
\mbox{either}\ \ T^*=+\infty \ \ \mbox{or}\ \ \limsup_{t\rightarrow T^*} \|u(\cdot, t)\|_{L^\infty(\Om)}=+\infty.
\end{equation*}

Arguing with contradiction, we assume
\begin{equation*}
T^*<+\infty\ \ \mbox{and}\ \ \limsup_{t\rightarrow T^*} \|u(\cdot, t)\|_{L^\infty(\Om)}<+\infty,
\end{equation*}
then for all $(x, t)\in\Om\times(0, T^*)$ there exists $M>0$ such that
\begin{equation}\label{yjm}
u(x, t)\leq M.
\end{equation}
Let $N\subset(0, T_{\max})$ be a set of measure zero, as provided by the definition of $S$, such that \eqref{4.47} holds for all $t\in(0, T_{\max})\backslash N$, then \eqref{4.47} gives us
\begin{equation*}
\|u(\cdot, t_0)\|_{W^{1, q}(\Om)}\leq D_1,\quad \forall\ t\in(0, T_{\max})\backslash N
\end{equation*}
with some $D_1>0$. Now by \eqref{yjm} and a second application of Lemma \ref{25} we know for each $t_0\in(0, T_{\max})\backslash N$ there exists $T(D_1)>0$ such that problem \eqref{2} with initial value $u(x, t_0)$ admits a strong $W^{1, q}$-solution $\left(\hat{u}, \hat{v}, \hat{w}\right)$ in $\Om\times(0, T(D_1))$ satisfying
\begin{equation}\label{27}
\begin{split}
\|\hat{u}(\cdot, t)\|_{W^{1, q}(\Om)}^q\leq\frac{1}{\sqrt{\left(\|u(t_0)\|_{W^{1, q}(\Om)}^{-2q}+1\right)e^{-2Ct}-1}},
\end{split}
\end{equation}
for a.e. $t\in(0, T(D_1))$. Thus, choosing any $t_0\in(0, T_{\max})\backslash N$ such that $t_0>T_{\max}-\frac{T(D_1)}{2}$ here, we would infer that
\begin{eqnarray*}
\tilde{u}(\cdot, t)=\left\{
\begin{array}{ll}
u(\cdot, t),\ \  &t\in(0, t_0],\\
\hat{u}(\cdot, t-t_0),\ \ & t\in(t_0, t_0+T(D_1))
\end{array}
\right.
\end{eqnarray*}
would define a strong $W^{1, q}$-solution of problem \eqref{2} in $\Om\times(0, t_0+T(D_1))$, which clearly would satisfy \eqref{4.47} for a.e. $t<t_0$. For $t>t_0$, \eqref{27} gives us
\begin{equation}\label{ncs}
\begin{split}
\|\tilde{u}(\cdot, t)\|_{W^{1, q}(\Om)}^q&\leq\frac{1}{\sqrt{\left(\|u(t_0)\|_{W^{1, q}(\Om)}^{-2q}+1\right)e^{-2C(t-t_0)}-1}}.
\end{split}
\end{equation}
It is easy to see from \eqref{4.47} that
\begin{equation*}\begin{split}
\left(\|u(t_0)\|_{W^{1, q}(\Om)}^{-2q}+1\right)e^{-2C(t-t_0)}&\geq\left(\|u_0\|_{W^{1, q}(\Om)}^{-2q}+1\right)e^{-2Ct_0}e^{-2C(t-t_0)}\\
&=\left(\|u_0\|_{W^{1, q}(\Om)}^{-2q}+1\right)e^{-2Ct},
\end{split}\end{equation*}
then \eqref{ncs} becomes
\begin{equation*}
\begin{split}
\|\tilde{u}(\cdot, t)\|_{W^{1, q}(\Om)}^q&\leq\frac{1}{\sqrt{\left(\|u_0\|_{W^{1, q}(\Om)}^{-2q}+1\right)e^{-2Ct}-1}}.
\end{split}
\end{equation*}
Namely, $\tilde{u}$ satisfy \eqref{4.47} for a.e. $t\in(0, t_0+T(D_1))$, this contradicts the definition of $T^*$, and then our proof is finished.
\end{proof}

\subsection{Blow-up of problem \eqref{2}}

The following technical tool is important for us, which comes from \cite[Lemma 4.9]{Winkler2014How}.

\begin{lemma}\label{31}
Let $a>0, b\geq0, d>0, \kappa>1$ be such that
\begin{equation*}
  a>(\frac{2b}{d})^{\frac{1}{\kappa}},
\end{equation*}
Then if for some $T>0$ the function $y\in C([0, T ))$ is nonnegative and satisfies
\begin{equation*}
  y(t)\geq a-bt+d\int_0^ty^\kappa(s)
\end{equation*}
for all $t\in (0, T)$, we necessarily have
\begin{equation*}
  T\leq\frac{2}{(\kappa-1)a^{\kappa-1}d}.
\end{equation*}
\end{lemma}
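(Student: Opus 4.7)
The plan is to convert the integral inequality into an autonomous differential inequality of Bernoulli type by introducing the lower barrier
\begin{equation*}
w(t) := a - bt + d\int_0^t y^\kappa(s)\,ds, \qquad t \in [0,T),
\end{equation*}
which is $C^1$ on $[0,T)$ because $y$ is continuous. By hypothesis $y(t)\geq w(t)$ on $[0,T)$, and differentiating gives $w'(t) = -b + d\,y^\kappa(t)$.

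First, I would run a continuity bootstrap to show $w(t)\geq a$ throughout $[0,T)$. Rewrite the hypothesis $a > (2b/d)^{1/\kappa}$ as $d a^\kappa > 2b$. Let $\tau := \sup\{t\in[0,T): w\geq a \text{ on } [0,t]\}$; we have $w(0)=a$, so $\tau>0$ by continuity. On $[0,\tau)$, $y\geq w\geq a$, hence $w'(t)=-b+d\,y^\kappa(t)\geq -b + da^\kappa > b \geq 0$. Thus $w$ is nondecreasing on $[0,\tau]$, giving $w(\tau)\geq a$; if $\tau<T$ this contradicts the maximality of $\tau$, so $\tau=T$.

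Second, with $w\geq a>0$ on $[0,T)$ now in hand, I bound $b < \tfrac{1}{2}da^\kappa \leq \tfrac{1}{2}d\,w^\kappa$, which upgrades the bound on $w'$ to the autonomous form
\begin{equation*}
w'(t) \geq d\,w^\kappa(t) - \tfrac{1}{2}d\,w^\kappa(t) = \tfrac{d}{2}\,w^\kappa(t), \qquad t\in[0,T).
\end{equation*}

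Third, I would integrate this Bernoulli-type inequality. Dividing by $w^\kappa>0$ and using $\frac{d}{dt}\bigl(-w^{1-\kappa}/(\kappa-1)\bigr) = w'/w^\kappa$, integration from $0$ to $t$ yields
\begin{equation*}
\frac{1}{(\kappa-1)a^{\kappa-1}} - \frac{1}{(\kappa-1)w(t)^{\kappa-1}} \;\geq\; \frac{d\,t}{2}, \qquad t\in[0,T).
\end{equation*}
Dropping the nonnegative subtracted term on the left gives $t < \frac{2}{(\kappa-1)a^{\kappa-1}d}$ for every $t\in[0,T)$, and hence $T \leq \frac{2}{(\kappa-1)a^{\kappa-1}d}$, which is the claimed bound.

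The main obstacle is the bootstrap in the first step: one must be careful that the monotonicity of $w$ holds on all of $[0,T)$ and not just on a small initial interval, since a priori we only know $y\geq w$ pointwise, not $y\geq a$. The hypothesis $a^\kappa>2b/d$ is used precisely to close this bootstrap with room to spare (the factor of $2$ reappears in step two as the factor $d/2$ in the final differential inequality, and ultimately as the factor of $2$ in the numerator of the final time bound).
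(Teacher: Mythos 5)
Your proof is correct, and it matches the standard argument for this lemma: the paper does not prove the statement itself but imports it from \cite[Lemma 4.9]{Winkler2014How}, whose proof uses exactly your strategy of introducing the lower barrier $w(t)=a-bt+d\int_0^t y^\kappa(s)\,ds$, bootstrapping to $w\geq a$ on $[0,T)$, absorbing $-b$ via $b<\tfrac12 da^\kappa\leq\tfrac12 dw^\kappa$ to get $w'\geq\tfrac{d}{2}w^\kappa$, and integrating the resulting Bernoulli inequality. The only gloss is your claim that $\tau>0$ ``by continuity'': continuity of $w$ with $w(0)=a$ does not by itself give $w\geq a$ on a right-neighborhood of $0$, but the observation $w'(0)=-b+d\,y^\kappa(0)\geq d a^\kappa-b>0$ (which your subsequent computation supplies) closes this immediately.
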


\begin{proof}[Proof of Theorem \ref{33}]
Let $(u, v, w)$ be the strong $W^{1,q}$-solution of problem \eqref{2} in $\Om\times(0, T^*)$ with the initial data satisfies \eqref{theta} and $\theta\in(1, 2]$, where $\Om$ is a bounded convex domain. For convenience, we chose the constants $c_1, c_2$ in \eqref{theta} as follow
\begin{equation*}
c_1:=\max\left\{|\Om|^{\frac{1}{\theta+1}}+\left(\frac{24d_1\theta\delta|\Om|^{\frac{1}{\theta}}C}{\alpha\gamma(\theta-1)}\right)^\theta,\ \ \ \left(\frac{12d_1\delta C_3|\Om|^{\frac{1}{\theta}}}{\alpha\gamma}\right)^{\frac{\theta}{\theta+1}} \right\},
\end{equation*}
and
\begin{equation*}
c_2:=\frac{(\theta-1)C_3\pi}{4C},
\end{equation*}
where $C_3, C$ are two positive constants which is independent of $\epsilon, u_0$ and will be introduced later, then by \eqref{theta} we can see
\begin{equation}\label{29cd}
\|u_{0}\|_{L^{\theta}(\Om)}^{\theta}>|\Om|^{\frac{1}{\theta+1}}+\left(\frac{24d_1\theta\delta|\Om|^{\frac{1}{\theta}}C}{\alpha\gamma(\theta-1)}\right)^\theta+\frac{(\theta-1)C_3\pi}{4C},
\end{equation}
and
\begin{equation}\label{29}
\|u_{0}\|_{L^{\theta}(\Om)}^{\theta}>\left(\frac{12d_1\delta C_3|\Om|^{\frac{1}{\theta}}}{\alpha\gamma}\right)^{\frac{\theta}{\theta+1}}m_1^{\theta}+\frac{(\theta-1)C_3\pi}{4C}.
\end{equation}
To see that actually
\begin{equation}\label{coms}
T^*\leq T_0:=\frac{\ln\left(\|u_{0}\|_{W^{1, \theta+1}(\Om)}^{-2(\theta+1)}+1\right)}{2C},
\end{equation}
where $C=C(r, d_1, d_2, \alpha, \beta, \gamma, \delta, \Om)>0$ is the constant arising in \eqref{2.5}, we suppose on the contrary that $T^*>T_0$, then by Theorem \ref{jbczs} we know $u\in C(\bar\Omega\times[0, T_0))\cap L^\infty([0, T_0); W^{1,q}(\Omega))$ and $v, w\in C^2(\bar\Omega\times[0, T_0))$.

Next, we aim to get a contradiction. As in \cite[Lemma 4.8]{Winkler2014How} and \cite[Lemma 4.9]{Chemotaxis}, we choose a suitable test function for the strong $W^{1, q}$-solution of problem \eqref{2}. Let $T_0\in(0, T), t_0\in(0, T_0), \delta\in(0, T_0-t_0)$ and
\begin{eqnarray*}
\chi_\delta(t)=
\left\{
\begin{array}{ll}
1,\quad &t<t_0,\\
\frac{t_0-t+\delta}{\delta},\quad & t\in[t_0, t_0+\delta],\\
0,\quad & t>t_0+\delta.
\end{array}
\right.
\end{eqnarray*}
Then for any constant $\xi>0$ and $h\in(0, 1)$ take functional
\begin{equation*}
\varphi(x,t):=\frac{\chi_\delta(t)}{h}\int_{t-h}^t(u+\xi)^{\theta-1},\quad (x,t)\in\Om\times(0,T)
\end{equation*}
in \eqref{13} we know
\begin{equation*}\begin{split}
&\frac{1}{\delta}\int_{t_0}^{t_0+\delta}\int_\Om\frac{u}{h}\int_{t-h}^t(u+\xi)^{\theta-1}-\int_\Om u_0(u_0+\xi)^{\theta-1}\\
&\quad-\int_0^T\int_\Om\chi_\delta(t)u\frac{(u+\xi)^{\theta-1}-(u(x,t-h)+\xi)^{\theta-1}}{h}\\
&=(\theta-1)\int_0^T\int_\Om\chi_\delta(t)\frac{u\nabla v}{h}\int_{t-h}^t(u+\xi)^{\theta-2}\nabla u\\
&\quad+\int_0^T\int_\Om\chi_\delta(t)\frac{ru-\mu u^\theta}{h}\int_{t-h}^t(u+\xi)^{\theta-1}.
\end{split}\end{equation*}
Use the continuity of $u$, Lebesgue's theorem and the definition of function $\chi_\delta(t)$, as $\delta\rightarrow0$ we can observe
\begin{equation}\begin{split}\label{28}
&\int_\Om\frac{u(\cdot,t_0)}{h}\int_{t_0-h}^{t_0}(u+\xi)^{\theta-1}-\int_\Omega u_0(u_0+\xi)^{\theta-1}\\
&\quad-\int_0^{t_0}\int_\Om u\frac{(u+\xi)^{\theta-1}-(u(x,t-h)+\xi)^{\theta-1}}{h}\\
&=(\theta-1)\int_0^{t_0}\int_\Om\frac{u\nabla v}{h}\int_{t-h}^t(u+\xi)^{\theta-2}\nabla u\\
&\quad+\int_0^{t_0}\int_\Om\frac{ru-\mu u^\theta}{h}\int_{t-h}^t(u+\xi)^{\theta-1}.
\end{split}\end{equation}
By \cite[Lemma 4.9]{Chemotaxis} we know
\begin{equation*}\begin{split}
&\limsup_{h\rightarrow0}\left(-\int_0^{t_0}\int_\Om u\frac{(u+\xi)^{\theta-1}-(u(x,t-h)+\xi)^{\theta-1}}{h}\right)\\
&\leq\frac{\theta-1}{\theta}\int_\Om (u_0+\xi)^{\theta}-\frac{\theta-1}{\theta}\int_\Om(u(\cdot,t_0)+\xi)^\theta-\xi\int_\Om(u_0+\xi)^{\theta-1}+\xi\int_\Om(u(\cdot,t_0)+\xi)^{\theta-1}.
\end{split}\end{equation*}
Therefore, as $h\rightarrow 0$, \eqref{28} becomes
\begin{equation*}\begin{split}
&\int_\Om u(\cdot,t_0)(u(\cdot,t_0)+\xi)^{\theta-1}+\frac{\theta-1}{\theta}\int_\Om (u_0+\xi)^{\theta}-\frac{\theta-1}{\theta}\int_\Om(u(\cdot,t_0)+\xi)^\theta\\
&\quad-\xi\int_\Om(u_0+\xi)^{\theta-1}+\xi\int_\Om(u(\cdot,t_0)+\xi)^{\theta-1}-\int_\Om u_0(u_0+\xi)^{\theta-1}\\
&\geq(\theta-1)\int_0^{t_0}\int_\Om u\nabla v(u+\xi)^{\theta-2}\nabla u-\mu\int_0^{t_0}\int_\Om u^\theta(u+\xi)^{\theta-1}.
\end{split}\end{equation*}
We therefore obtain from $\xi\rightarrow 0$ that
\begin{equation*}\begin{split}
&\int_\Om u^\theta(\cdot,t_0)+\frac{\theta-1}{\theta}\int_\Om u_0^\theta-\frac{\theta-1}{\theta}\int_\Om u^\theta(\cdot,t_0)-\int_\Om u_0^\theta\\
&\geq(\theta-1)\int_0^{t_0}\int_\Om u^{\theta-1}\nabla v\nabla u-\mu\int_0^{t_0}\int_\Om u^{2\theta-1},
\end{split}\end{equation*}
which combines integration by parts and the second and the third equations in \eqref{2} lead to
\begin{equation}\label{ni}
\begin{split}
&\frac{1}{\theta}\int_\Om u^\theta(\cdot,t_0)-\frac{1}{\theta}\int_\Om u_0^\theta\\
&\geq-\frac{(\theta-1)}{\theta}\int_0^{t_0}\int_\Om u^{\theta}\Delta v-\mu\int_0^{t_0}\int_\Om u^{2\theta-1}\\
&=\frac{(\theta-1)}{\theta}\int_0^{t_0}\int_\Om\left[\frac{d_2\alpha}{d_1\delta}u^{\theta}\Delta w+\frac{\alpha\gamma}{d_1\delta}u^{\theta+1}-\frac{\beta}{d_1}u^{\theta}v-
\frac{\theta\mu}{\theta-1}u^{2\theta-1}\right].
\end{split}
\end{equation}

Next, we aim to estimate the terms in the right hand-side of \eqref{ni}, respectively. By Lemma \ref{3.2} we know that $u$ satisfies \eqref{2.5}, so for $u^{\theta}\Delta w$, the integration by parts, the Young inequality, Lemma \ref{2ly} and \eqref{2.5} suggest that there exists arbitrary $\varepsilon_0, \varepsilon_1>0$ such that
\begin{equation}\begin{split}\label{w2}
\frac{d_2\alpha}{d_1\delta}\int_\Om u^{\theta}\Delta w&=-\frac{d_2\alpha\theta}{d_1\delta}\int_\Om u^{\theta-1}\nabla u\nabla w\\
&\geq-\varepsilon_0\int_\Om u^{\theta+1}-c(\varepsilon_0)\int_\Om |\nabla u\nabla w|^{\frac{\theta+1}{2}}\\
&\geq-\varepsilon_0\int_\Om u^{\theta+1}-\varepsilon_1c(\varepsilon_0)\int_\Om|\nabla w|^{\theta+1}-c(\varepsilon_0)c(\varepsilon_1)\int_\Om|\nabla u|^{\theta+1}\\
&\geq-\varepsilon_0\int_\Om u^{\theta+1}-\varepsilon_1c(\varepsilon_0)C_*\int_\Om u^{\theta+1}-c(\varepsilon_0)c(\varepsilon_1)m(t),
\end{split}\end{equation}
where $C_*=C(d_2, \gamma, \delta, n, \Om)$ is the constant given by Lemma \ref{2ly} and
\begin{equation*}
m(t):=\frac{1}{\sqrt{\left|\left(\|u_{0}\|_{W^{1, \theta+1}(\Om)}^{-2(\theta+1)}+1\right)e^{-2Ct}-1\right|}}.
\end{equation*}
By Lemma \ref{xs3} and \eqref{hy} we know for any $\varepsilon_2>0$ there exists a positive constant $c(\varepsilon_2)$ which depends on $\varepsilon_2, d_1, d_2, \alpha, \beta, \gamma, \delta, \theta$ such that
\begin{equation*}\begin{split}
\int_\Om v^{\theta+1}&\leq\varepsilon_2\int_\Omega u^{\theta+1}+c'(\varepsilon_2)\left(\int_\Om u\right)^{\theta+1}+c'(\varepsilon_2)\left(\int_\Om w\right)^{\theta+1}\\
&\leq\varepsilon_2\int_\Omega u^{\theta+1}+c(\varepsilon_2)m_1^{\theta+1}.
\end{split}\end{equation*}
Since $\theta\in(1, 2]$, i.e., $2\theta-1\leq\theta+1$, then for any $\varepsilon_3, \varepsilon_4>0$ the Young inequality leads to
\begin{equation}\label{v2}\begin{split}
&\int_\Om\left(-\frac{\beta}{d_1}u^{\theta}v-\frac{\theta\mu}{\theta-1}u^{2\theta-1}\right)\\
&\geq\int_\Om\left(-\varepsilon_3 u^{\theta+1}-c(\varepsilon_3)v^{\theta+1}-\varepsilon_4u^{\theta+1}-c(\varepsilon_4)\right)\\
&\geq\int_\Om\left(-\varepsilon_3u^{\theta+1}-\varepsilon_2c(\varepsilon_3)u^{\theta+1}-\varepsilon_4u^{\theta+1}\right)-\left(c(\varepsilon_4)+c(\varepsilon_2)c(\varepsilon_3)m_1^{\theta+1}\right)|\Om|.
\end{split}\end{equation}
Applying estimates \eqref{w2} and \eqref{v2} into \eqref{ni}, then
\begin{equation*}
\begin{split}
\frac{1}{\theta}\int_\Om u^\theta(\cdot,t_0)-\frac{1}{\theta}\int_\Om u_0^\theta\geq\frac{\theta-1}{\theta}\int_0^{t_0}\left[\Phi\|u\|^{\theta+1}_{L^{\theta+1}(\Om)}-\Psi\right].
\end{split}
\end{equation*}
where
\begin{equation*}\begin{split}
&\Phi=\frac{\alpha\gamma}{d_1\delta}-\varepsilon_0-\varepsilon_1c(\varepsilon_0)C_*-\varepsilon_3-\varepsilon_2c(\varepsilon_3)-\varepsilon_4,\\
&\Psi=c(\varepsilon_0)c(\varepsilon_1)m(t)+\left(c(\varepsilon_4)+c(\varepsilon_2)c(\varepsilon_3)m_1^{\theta+1}\right)|\Om|,
\end{split}\end{equation*}
Then, in turn, we take $\varepsilon_0=\varepsilon_3=\varepsilon_4=\frac{\alpha\gamma}{6d_1\delta}, \varepsilon_1=\frac{\alpha\gamma}{6d_1\delta c(\varepsilon_0)C_*}, \varepsilon_2=\frac{\alpha\gamma}{6d_1\delta c(\varepsilon_3)}$ and $\varepsilon_3=\frac{\alpha\gamma}{6d_1\delta c(\varepsilon_2)c(\varepsilon_4)}$, it is easy to see that once the constants $\varepsilon_i, i=0, \cdots, 4$, are fixed, then $c(\varepsilon_i), i=0, \cdots, 4$, are also determined by the parameters and $n, \Om$. Hence, we have $\Phi=\frac{\alpha\gamma}{6d_1\delta}$ and there exists positive constant $C_3$ which depends on $r, \mu, d_1, d_2, \alpha, \beta, \gamma, \delta, n, \Om$ such that $\Psi\leq C_3\left(m(t)+m_1^{\theta+1}\right)$, and then we obtain
\begin{equation*}
\begin{split}
\int_\Om u^\theta(\cdot,t_0)-\int_\Om u_0^\theta\geq(\theta-1)\int_0^{t_0}\left[\frac{\alpha\gamma}{6d_1\delta}\|u\|^{\theta+1}_{L^{\theta+1}(\Om)}-C_3\left(m_1^{\theta+1}+m(t)\right)\right].
\end{split}
\end{equation*}
By \eqref{29cd} we know $\|u_{0}\|_{L^{\theta}(\Om)}^{\theta}>|\Om|^{\frac{1}{\theta+1}}$, then by the H\"{o}lder inequality it is easy to obtain
\begin{equation}\label{dayu1}
\|u_{0}\|_{W^{1, \theta+1}(\Om)}^{\theta+1}>\|u_{0}\|_{L^{\theta+1}(\Om)}^{\theta+1}\geq|\Om|^{-\frac{1}{\theta}}\|u_{0}\|_{L^{\theta}(\Om)}^{\theta+1}>1,
\end{equation}
then with the help of the definitions of $m(t), T_0$ and some simple calculations we can see for $t_0\in (0, T_0)$,
\begin{equation*}\begin{split}
-\int_0^{t_0}m(t)&\geq-\int_0^{T_0}\frac{1}{\sqrt{\left|\left(\|u_{0}\|_{W^{1, \theta+1}(\Om)}^{-2(\theta+1)}+1\right)e^{-2Ct}-1\right|}}\\
&=-\frac{\arctan\left(\|u_{0}\|_{W^{1, \theta+1}(\Om)}^{-(\theta+1)}\right)}{C}\\
&\geq-\frac{\pi}{4C},
\end{split}\end{equation*}

Let $y(t)=\int_\Om u^\theta$ for $t\in(0, T_0)$, then $y(t)\in C^0([0, T_0))$ is nonnegative and it follows from the arbitrariness of $t_0$ and H\"{o}lder's inequality that
\begin{equation*}\begin{split}
y(t)&\geq y(0)-\frac{(\theta-1)C_3\pi}{4C}-(\theta-1)C_3m_1^{\theta+1}t+\frac{\alpha\gamma(\theta-1)}{6d_1\delta}|\Om|^{-\frac{1}{\theta}}\int_0^{t}y^{\frac{\theta+1}{\theta}}(\tau),
\end{split}\end{equation*}
for all $t\in(0, T_0)$, then we can infer from \eqref{29} that the condition of Lemma \ref{31} is satisfied and then
\begin{equation*}
T_0\leq\frac{12d_1\theta\delta|\Om|^{\frac{1}{\theta}}}{\alpha\gamma(\theta-1)\left[\|u_0\|^\theta_{L^\theta(\Om)}-\frac{(\theta-1)C_3\pi}{4C}\right]^{\frac{1}{\theta}}},
\end{equation*}
which combines with the definition of $T_0$ in \eqref{coms} imply that
\begin{equation*}\begin{split}
\left[\|u_0\|^\theta_{L^\theta(\Om)}-\frac{(\theta-1)C_3\pi}{4C}\right]^{\frac{1}{\theta}}&<\ln\left(\|u_{0}\|_{W^{1, \theta+1}(\Om)}^{-2(\theta+1)}+1\right)\left[\|u_0\|^\theta_{L^\theta(\Om)}-\frac{(\theta-1)C_3\pi}{4C}\right]^{\frac{1}{\theta}}\\
&\leq\frac{24d_1\theta\delta|\Om|^{\frac{1}{\theta}}C}{\alpha\gamma(\theta-1)},
\end{split}\end{equation*}
then
\begin{equation*}
\|u_0\|^\theta_{L^\theta(\Om)}<\left(\frac{24d_1\theta\delta|\Om|^{\frac{1}{\theta}}C}{\alpha\gamma(\theta-1)}\right)^\theta+\frac{(\theta-1)\pi}{4C},
\end{equation*}
this contradicts \eqref{29cd}, so we get \eqref{coms}, i.e., $T^*\leq T_0$, so the conclusion that $u$ blows up in finite time is then an immediate consequence of Theorem \ref{jbczs}.
\end{proof}

\subsection{Proof of Theorem \ref{d1q}}

Now we are at the position to show the proof of Theorem \ref{d1q}.

\begin{proof}[Proof of Theorem \ref{d1q}]
Let $(u_\epsilon, v_\epsilon, w_\epsilon)$ be the classical solution of problem \eqref{xd1} and $(u, v, w)$ be the strong $W^{1, p}$-solution of problem \eqref{2}. Assume that the conclusion of Theorem \ref{d1q} were not true, then for the solution of problem \eqref{2} there would be $M>0$ and any $\epsilon>0$ such that
\begin{equation}\label{hadcv}
u_\epsilon(x, t)\leq M,
\end{equation}
for all $(x,t)\in\Om\times(0, +\infty)$. Let $(\epsilon_j)_{j\in\mathbb{N}}\subset(0, 1)$ be a zero sequence such that $\epsilon_j\rightarrow 0$ as $j\rightarrow\infty$, so we can infer from \eqref{hadcv} and Lemma \ref{25} that
\begin{equation*}\begin{split}
&u_{\epsilon_j}\rightarrow\bar u\quad\mbox{ in}\ \ C(\bar\Om\times[0,T]),\\
&v_{\epsilon_j}\rightarrow\bar v\quad\mbox{ in}\ \ C^{2,0}(\bar\Om\times[0,T]),\\
&w_{\epsilon_j}\rightarrow\bar w\quad\mbox{ in}\ \ C^{2,0}(\bar\Om\times[0,T]),
\end{split}\end{equation*}
as $j\rightarrow\infty$, where $(\bar u, \bar v, \bar w)$ is the strong $W^{1, p}$-solution of problem \eqref{2}. Due to Lemma \ref{wyi} we know such solution is unique and then $(\bar u, \bar v, \bar w)=(u, v, w)$ and
\begin{equation*}
u(x,t)=\bar u(x,t)\leq M\quad \mbox{ in}\ \ \Omega\times(0, T).
\end{equation*}
However, since the initial data satisfies \eqref{theta}, we can see from Theorem \ref{33} that the strong $W^{1, p}$-solution $(u, v, w)$ of problem \eqref{2} blows up in finite, so there is a contradiction.
\end{proof}

\section*{Acknowledgement}

The author thanks Prof. Yasushi Taniuchi for his useful discussions on the logarithmic Sobolev inequality \cite{Kozono2000}. The author conveys thanks to Prof. Michael Winkler for his impressive work \cite{Winkler2014How} which gives many ideas for this paper. The author also thanks the reviewers for their helpful comments and suggestions which improve this paper greatly.

%\bibliographystyle{plain}
%\bibliography{a}

\end{document}